\documentclass[11pt]{article}
\usepackage{amsthm}
\usepackage[english]{babel}
\usepackage{etoolbox}
\AtBeginEnvironment{thebibliography}{%

  \setlength{\parskip}{0pt}
  \setlength{\baselineskip}{10pt}
    \setlength{\itemsep}{0pt}
}
\usepackage[letterpaper,top=2cm,bottom=2cm,left=3cm,right=3cm,marginparwidth=1.75cm]{geometry}

\usepackage{xcolor}
\definecolor{darkblue}{RGB}{0, 0, 100}
\definecolor{darkorange}{RGB}{200, 100, 0}

\newcommand{\inD}{\in A(D)}

\newcommand{\vP}{\mathcal{P}}
\newcommand{\vQ}{\mathcal{Q}}

\newcommand{\rev}[1]{\overleftarrow{#1}}

\newtheorem{theorem}{Theorem}[section]
\newtheorem{lemma}{Lemma}

\newtheorem{definition}{Definition}

\newtheorem{proposition}{Proposition}
\newtheorem{problem}{Problem}
\newtheorem{conjecture}{Conjecture}
\newtheorem{claim}{Claim}

\usepackage{amsmath}
\usepackage{graphicx}
\usepackage[colorlinks=true, allcolors=blue]{hyperref}
\usepackage{tikz}
\usetikzlibrary{arrows.meta, shapes.geometric}
\usetikzlibrary{positioning, arrows, shapes, calc}
\usepackage{graphicx} 
\usepackage{subcaption}
\usepackage{float} 
\usepackage{amssymb}
\usepackage{amsmath}
\usepackage{enumitem} 
\usepackage{mathtools}

\usepackage{bm} 
\usepackage{extarrows}
\usepackage{mathrsfs} 

\usepackage{lipsum}

\title{Sparse spanning  $k$-strong oriented subdigraphs in  split digraphs\thanks{The author's work is supported by National Natural Science Foundation of China (No.12571373)}}
\author{Jia Zhou\textsuperscript{1}, J{\o}rgen Bang-Jensen\textsuperscript{2,3}, Jin Yan\textsuperscript{2} \footnote{Corresponding author. E-mail address: yanj@sdu.edu.cn.} \unskip\\[2mm]
\textsuperscript{1} \small School of Mathematics and Statistics, Ningxia University, Yinchuan 750021, China\\
\textsuperscript{2} \small School of Mathematics, Shandong University, Jinan 250100, China
\\
\textsuperscript{3} \small Department of Mathematics and Computer Science, University of\\ \small Southern Denmark, Odense DK-5230, Denmark}
\date{ }

\begin{document}
\maketitle
\bibliographystyle{plain}

\begin{abstract}
Jackson and Thomassen conjectured that every $2k$-strong digraph admits a spanning $k$-strong oriented subdigraph [Ann. N. Y. Acad. Sci. 555 (1989) 402–412]. The conjecture holds for $k=1$ but other than some partial results that have been obtained for general $k$ in some special families of digraphs, including symmetric digraphs, the conjecture remains wide open in general. Even the existence of an integer $K$ such that every $K$-strong digraph has a 2-strong spanning oriented subdigraph is open. As a natural optimization counterpart, the minimum spanning $k$-strong subdigraph (MSSS$_k$) problem, is to find the minimum number of arcs in a spanning $k$-strong subdigraph of a $k$-strong digraph. {This problem is NP-hard already for $k=1$ as it generalizes the hamiltonian cycle problem.} In this paper, we address both problems simultaneously for the class of split digraphs, by constructing sparse spanning $k$-strong oriented subdigraphs. Specifically, we prove that every $k$-strong split digraph $D = (V_1, V_2; A)$ with  minimum semi-degree {$\delta^0(D)\geq 26k+15$} contains a spanning $k$-strong oriented subdigraph with no more than $kn+k|V_1|+98k^2+38k+3$  arcs, where $kn + k|V_1|$ is tight and the $k^{2}$ term is tight up to a constant factor.
For the class of $k$-strong tournaments with minimum semi-degree at least { $26k+15$} our results improve the bound obtained by Kang in [Combin. Probab. Comput., 27:892–907, 2018].

\end{abstract}

 \vspace{1ex}
{\noindent\small{\bf Keywords: } connectivity; orientation; split digraphs; spanning subdigraph; semicomplete digraph}
\vspace{1ex}

{\noindent\small{\bf AMS subject classifications.} 05C20, 05C38, 05C40}


\section{Introduction}
An {\bf orientation} of a digraph $D=(V,A)$ is any spanning subdigraph $D'=(V,A')$ of $D$ that can be obtained by deleting one arc from every 2-cycle of $D$.
Let \( k \) be a positive integer. A digraph is \textbf{strongly \( \boldsymbol{k} \)-connected} (briefly, $k$-strong) if it has at least \( k+1 \) vertices and, after deleting any set \( S \) of at most \( k-1 \) vertices, there exists a path from \( x \) to \( y \) for any pair of distinct vertices \( x \) and \( y \).  Every $k$-strong digraph trivially admits a spanning $k$-strong subdigraph, while finding a sparse one remains a challenging task already for $k=1$ as it is NP-complete to find the minimum number of arcs in a spanning strong subdigraph of a strong digraph. This paper considers sparsity from two different aspects: one is to eliminate all $2$-cycles to construct a spanning $k$-strong oriented subdigraph, and the other is to minimize the total number of arcs of the desired spanning $k$-strong oriented subdigraph.

The existence of spanning  $k$-strong oriented subdigraphs  {in highly connected digraphs} is a long-standing open problem.
 {Let $\boldsymbol{f_{\mathcal C}(k)}$ denote the smallest integer $r$ such that every $r$-strong digraph in class $\mathcal C$ admits a spanning $k$-strong oriented subdigraph,
with $f_{\mathcal C}(k)=\infty$ if no such integer exists.}  As a generalization of Robbins' theorem, which states that a graph has a strong orientation if and only if it is $2$-edge-connected~\cite{Robbins1939}, Boesch and Tindell~\cite{boesch1980} proved that every $2$-strong digraph contains a spanning strong oriented subdigraph. Jackson and Thomassen \cite{thomassen1989}   {conjectured the following for general integer $k$}.
\begin{conjecture}\label{conj1}
\emph{(Jackson and Thomassen \cite{thomassen1989})} Every $2k$-strong digraph contains a spanning $k$-strong oriented subdigraph.
\end{conjecture}
The lower bound is given by the complete digraph on $2k$ vertices, which is $(2k-1)$-strong  {but contains no spanning $k$-strong oriented subdigraph.} The conjecture remains open even for $k=2$, and it is still unknown whether $f_{\mathcal D}(2)$ is finite, where $\mathcal D$ is the class of all digraphs.

 {Given the difficulty of Conjecture \ref{conj1}, attention has turned to special families of digraphs.} Let $\mathcal S$ denote the class of \textbf{symmetric digraphs}, where $uv$ is an arc if and only if $vu$ is an arc. Jord\'an~\cite{115} obtained a result implying $f_{\mathcal S}(2)\le18$, and Thomassen~\cite{186} later proved that $f_{\mathcal S}(2)=4$. Very recently, Garamv\"olgyi, Jord\'an, Kir\'aly and Vill\'anyi~\cite{garamvolgyi2025highly} proved that $f_{\mathcal S}(k)\le320k^2$. Let $\mathcal{SC}$ denote the class of \textbf{semicomplete digraphs}, that is, digraphs where there is at least one arc between
every pair of distinct vertices. A \textbf{tournament} is a semicomplete digraph with no 2-cycles. Bang-Jensen and Jord\'an~\cite{bangDM310} conjectured that every $(2k-1)$-strong semicomplete digraph on at least $2k+1$ vertices contains a spanning $k$-strong tournament. They proved the case $k=2$, and the case $k=3$ was verified by Wang, Qi and Yan \cite{wang2024spanning}. Let $\mathcal{LSC}$ denote the class of \textbf{locally semicomplete digraphs}, where both the in-neighbourhood and the out-neighbourhood of every vertex induce semicomplete digraphs. Guo~\cite{5} proved  $f_{\mathcal{LSC}}(k)\le3k-2$.  {Let $\mathcal{ESC}$ denote the class of \textbf{extended semicomplete digraphs}, obtained from semicomplete digraphs by replacing each vertex with an independent set. Recently, Zhou, Bang-Jensen, Zhou and Yan~\cite{ZhouBangJensenZhouYan2026} proved that
$f_{\mathcal{ESC}}(k)\le 4k+1$. For other generalizations of semicomplete digraphs, it was shown in \cite{13,61,62} that strong connectivity is sufficient to guarantee the existence of a spanning strong oriented subdigraph.}



The Minimum Spanning Strongly Connected Subdigraph (MSSS) problem asks for the minimum number of arcs in a spanning strong subdigraph of a digraph. This is also a fundamental combinatorial optimization problem for digraphs, as it generalizes the Hamiltonian cycle problem.  {A natural generalization} is the MSSS$_{k}$ problem, which is to find the minimum number of arcs in a spanning $k$-strong subdigraph of a highly strong digraph. Mader \cite{mader1985} first proved that every $k$-strong digraph of order $n\geq 4k+3$ contains a spanning $k$-strong subdigraph with at most $2kn-2k^2$ arcs. {While $2kn-2k^{2}$ is tight for general digraphs, it can be improved in the case of dense digraphs.} In 2009, Bang-Jensen \cite{bang2009} posed the problem of whether there exists a function $g(k)$ such that every $k$-strong tournament has a spanning $k$-strong subdigraph with at most $kn+g(k)$ arcs. {An earlier} result of Bang-Jensen, Huang and Yeo \cite{bangJGT46} implies that $g(k)\geq k(k-1)/2$. {Kang, Kim, Kim and Suh \cite{kang2017} proved the existemce of $g(k)$ by showing }that one can take $g(k)=750k^2\log_2(k+1)$.  In 2019, Kang \cite{kang2018} further improved the upper bound of $g(k)$, which was given in \cite{kang2017}, to {$g(k)\leq 800k^2$}. 
He also established relevant results for dense digraphs and posed the following conjecture. Here, $\delta(D)$ denotes the minimum, over all vertices, of the sum of the numbers of its in-neighbours and out-neighbours, while the minimum semi-degree, $\delta^0(D)$, denotes the minimum over all in-degrees and out-degrees of vertices of $D$.

\begin{conjecture}\label{conj2} \emph{\cite{kang2018}} There is a constant $C > 0$ such that for integers $k, l, n \ge 1$, every $k$-strong digraph $D$ of order $n$ with $\delta(D) \ge n-l$ contains a spanning $k$-strong subdigraph with at most $kn + kl + Ck^2$ arcs.
\end{conjecture}

In this paper, we consider split digraphs which form a natural  generalization of semicomplete digraphs. A \textbf{split digraph} \(D=(V_1,V_2;A)\) is a digraph whose vertex set can be partitioned into an independent set \(V_1\) and a subset \(V_2\) that induces a semicomplete subdigraph.  Due to the lack stucture on the arcs between the sets $V_1$ and $V_2$ some problems that are easy for semicomplete digraphs become very hard on split digraphs. One such example is the hamiltonian cycle problem which is 'trivial' for semicomplete digraphs (every strong semicomplete digraph has a hamiltonian cycle) but NP-complete for general split digraphs. Hence split digraphs, which are relatively 'understudied' form a very interesting class to study. Among the few available structural results, we mention the following. Bang-Jensen and Wang~\cite{BangJensenWangSplit} investigated strong arc decompositions of split digraphs. {Chen, Bang-Jensen, Yan, and Zhou \cite{ChenBangJensenYanZhou2026} studied the $2$-linkage problem for split digraphs.} For the restricted subclass in which $D[V_2]$ is a tournament, Nguyen, Scott and Seymour~\cite{NguyenScottSeymour} recently studied $2$-kernels and verified the Erd\H{o}s--Sz\'ekely conjecture for this class. 
   Our main result addresses the two sparsity aspects above simultaneously, which supports Conjectures \ref{conj1} and \ref{conj2} for the case of split digraphs.

\begin{theorem}\label{main1}
Every $k$-strong split digraph $D=(V_1,V_2;A)$ of order $n$ with { $\delta^0(D)\ge 26k+15$} contains a spanning $k$-strong oriented subdigraph with at most $kn+k|V_1|+98k^2+38k+3$ arcs, which can be constructed in polynomial time.
\end{theorem}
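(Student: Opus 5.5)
We will follow the strategy Kang used for tournaments: build a small, very robust "core" oriented subdigraph inside the semicomplete part $D[V_2]$, then attach every remaining vertex to the core with exactly $k$ in-arcs and $k$ out-arcs. The count $kn+k|V_1|$ suggests the bookkeeping. Each vertex of $V_2$ outside the core gets $k$ out-arcs, and its in-arcs are absorbed by the out-arcs of other vertices, giving roughly $k$ arcs per $V_2$-vertex. Each vertex of $V_1$ needs $k$ in-arcs and $k$ out-arcs, all going to $V_2$ since $V_1$ is independent, so it costs $2k$ arcs. The core costs $O(k^2)$ arcs.

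\textbf{Step 1 (core).} Since every $v\in V_1$ has $\delta^0\ge 26k+15$ neighbours in $V_2$, we get $|V_2|\ge 26k+15$, so $D[V_2]$ is a large semicomplete digraph. Using the median-order / low-degree structure of semicomplete digraphs, we choose disjoint sets $A,B\subseteq V_2$ of size $O(k)$ together with an oriented subdigraph $H$ on $A\cup B$ with $O(k^2)$ arcs (about $98k^2$). The goal is that $H$ is "$k$-linked from $B$ to $A$": for any deletion of $k-1$ vertices, every surviving vertex of $A$ reaches every surviving vertex of $B$ inside $H$. To get this, we take a $k$-out-dominating set $A$ and a $k$-in-dominating set $B$ of vertices of large out- and in-degree in a tournament $T\subseteq D[V_2]$. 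Choosing one arc from each 2-cycle gives the orientation; we select $T$ so that degrees stay about half.

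\textbf{Step 2 (attaching $V_2$).} We order $V_2\setminus(A\cup B)$ by a median order of $T$. Each vertex $v$ gets $k$ out-arcs toward vertices "later" in the order or into $B$, and we arrange that it also receives $k$ in-arcs from earlier vertices or from $A$. The arcs are chosen so that they are charged once, giving about $k|V_2|$ arcs. This is essentially Kang's argument, and the semi-degree bound $26k+15$ is what leaves enough room after the removals. Every arc used lies inside the tournament $T$, so no 2-cycles are created.

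\textbf{Step 3 (attaching $V_1$).} For $v\in V_1$ we need $k$ out-arcs and $k$ in-arcs to $V_2$, with distinct endpoints on the two sides so that no 2-cycle $v u v$ is used. We also need these endpoints to lead to $B$ (respectively come from $A$) via $k$ internally disjoint paths in the already built structure. Because $d^+(v),d^-(v)\ge 26k+15$ and $|N^+(v)\cap N^-(v)|$ may be large, we split greedily. If $|N^+(v)\setminus N^-(v)|<k$, we pick from the common neighbours, choosing orientation arbitrarily. Then Menger's theorem applied to the Step 2 structure gives $k$-connectivity. The cost is $2k|V_1|$.

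\textbf{Main obstacle: verifying $k$-strong connectivity.} We check that after removing any $S$ with $|S|\le k-1$, every vertex still reaches $A\setminus S$ and is reached from $B\setminus S$. For $V_2$-vertices the fan lemma handles this through the median order. The delicate case is $V_1$-vertices whose out-neighbours in $V_2$ all sit at the "end" of the order, or which are poorly connected. We expect to handle this by adding $O(k)$ extra vertices of such types into the core, paying the $38k+3$ term, and by using the hypothesis that $D$ is $k$-strong to reroute through $V_1$ where needed.

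Polynomial-time constructibility follows because each step is greedy or uses flow computations.
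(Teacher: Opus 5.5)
There is a genuine gap, and it sits exactly where the hypothesis that $D$ is $k$-strong has to enter. Your Step~1 asks for a core $H$ with $O(k^2)$ arcs on $O(k)$ vertices of $V_2$. This core is supposed to robustly link the set that every vertex reaches to the set from which every vertex is reached. (Your directions are also inconsistent: $A$ is called out-dominating, yet every vertex is supposed to reach $A$.) No such core need exist, because $D[V_2]$ need not even be strong.

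For instance, let $D[V_2]$ be a transitive tournament split into consecutive blocks $B_1,\dots,B_t$ of size $26k+15$. For each $j\ge 2$, add $26k+15$ vertices to $V_1$, each dominated by all of $B_j$ and dominating all of $B_{j-1}$. This split digraph is $k$-strong with $\delta^0(D)\ge 26k+15$. Yet every path from $B_t$ (where the nearly in-dominating vertices of $T$ lie) to $B_1$ (where the nearly out-dominating ones lie) visits $V_1$ at least $t-1$ times. So the return connection must come from something like $k$ disjoint Menger paths in all of $D$, and their total length can be linear in $n$. It cannot be part of an $O(k^2)$-arc core, and your closing remark about adding $O(k)$ vertices and ``rerouting through $V_1$ where needed'' does not address this.

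The paper's proof is built around exactly this point. It takes a nearly in-dominating set $X$ and a nearly out-dominating set $Y$, each of size $3k+3$, in a spanning tournament $T$ of $D[V_2]$, in the sense of Definition~\ref{def2}. Large degrees alone do not give the many internally disjoint $2$-paths used in Lemma~\ref{nearly}. It then joins $A\subseteq X$ to $B\subseteq Y$ by $k$ disjoint minimal paths $\mathcal{P}$ of $D$. It reorients $T$ into $T_{\mathrm{rev}}$ so that $T_{\mathrm{rev}}$ contains the arcs of $\mathcal{P}$ inside $V_2$. So your assertion that all arcs lie in $T$, and hence no $2$-cycles arise, is not available.

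The bound $kn+k|V_1|+O(k^2)$ then survives only because of three devices:
\begin{enumerate}
\item the $V_2$-vertices on $\mathcal{P}$ receive only a $(\sigma,k-1,2)$-nice structure from Lemma~\ref{good graph}, with the path supplying the $k$-th out- and in-arc;
\item the $V_1$-vertices on $\mathcal{P}$ count their path arcs among their $k$ in- and out-arcs;
\item in the connectivity check, the case $S=N^+_{T'_{\mathrm{sparse}}}(v_i)$ is handled by following the rest of the minimal path $P_j$, which avoids $S$, to $y_j$.
\end{enumerate}

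Your Step~2 also leaves the last vertices of the ordering unprotected. They may be among the at most $2c$ exceptional vertices of Definition~\ref{def2}, and they may have fewer than $k$ out-neighbours in $T$. This is why the paper gives every vertex of the absorbing sets $S_{\mathrm{in}},S_{\mathrm{out}}$ a fan of paths of length at most $4$ into $X$ (resp.\ from $Y$). These fans run through the vertex's $D$-neighbourhood, possibly via $V_1$, and are built iteratively in $D$ minus all previously reversed arcs. Without these ingredients neither $k$-strong connectivity nor the arc bound follows. The split $98k^2+38k+3$ in your sketch is read off the statement rather than derived.
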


Since our target object is a spanning $k$-strong subdigraph, the $k$-strong connectivity assumption in Theorem~\ref{main1} is best possible. Moreover,  $kn+k|V_1|$ in the arc bound is best possible (see Proposition \ref{prop2} in Section \ref{sec:remarks}), while the quadratic dependence on $k$ is unavoidable due to the result of Bang-Jensen, Huang and Yeo
mentioned above. Thus Theorem~\ref{main1} simultaneously gives a positive result towards the spanning oriented subdigraph problem and a sparse spanning $k$-strong subdigraph result for split digraphs. Under the additional minimum semi-degree condition, Theorem \ref{main1} improves the bound established by Kang \cite{kang2018} in terms of the number of arcs.

The proof of Theorem \ref{main1} uses the sparse linkage framework from~\cite{kang2017} as a foundational tool, yet the key novelties are twofold: we utilise a nearly dominating set technique from our recent work~\cite{zhouDM26}, which is critical for constructing $k$-strong subdigraphs using as few arcs as possible, and we develop a new fan construction to precisely handle the arc overhead while preserving $k$-strong connectivity. For relevant research advances concerning the existence of spanning $k$-arc-strong subdigraphs, we refer the reader to \cite{bangJGT46,berg2005,dalmazzo1977,kang2018, Zhouarxiv}.

The rest of the paper is organized as follows. Section 2 provides notation. Preliminaries and auxiliary lemmas are collected and established in Subsection 3.1, and Theorem \ref{main1} is proved in Subsection 3.2. Section 4 demonstrates the optimality of the arc bound and the necessity of the minimum semi-degree assumption in Theorem \ref{main1}.

\section{Notation}

Notation not specified in this section is consistent with that in \cite{book}.  For an integer $i$, we use the notation  $\boldsymbol{[i]} = \{1, \ldots, i\}$, and $\boldsymbol{[i, i+j]} = \{i, i+1, \ldots, i+j\}$.  Given a digraph \(D = (V, A)\) with vertex set \(V\) and arc set \(A\), the order of \(D\) is denoted by \(\boldsymbol{|D|}\). All digraphs considered herein are simple, i.e., without loops and multiple arcs. For an arc $(u,v)$, we write $uv$ and say $\boldsymbol{u}$ {\bf dominates} $\boldsymbol{v}$, denoted $\boldsymbol{u \to v}$.
For a vertex $x\in V(D)$,
\[
\boldsymbol{N_D^+(x)} = \{y : xy \in A(D)\}, \quad \boldsymbol{d_D^+(x)} = |N_D^+(x)|,\]
\[{\boldsymbol{N_D(x) }= \{y : xy \in A(D)\text{ or } yx\in A(D)\},  \quad \boldsymbol{d_D(x)} = |N_D(x)|},
\]
 denote the {\bf out-neighborhood}, {\bf out-degree}, {\bf neighborhood} and  {\bf degree} of $x$; the {\bf in-neighborhood} $N_D^-(x)$ and {\bf in-degree} $d_D^-(x)$ are defined analogously. For $v\in V(D)$ and $U\subseteq V(D)$, let
$\boldsymbol{d_D^+(v,U)}:=|N_D^+(v)\cap U|$ and
$\boldsymbol{d_D^-(v,U)}:=|N_D^-(v)\cap U|$. {Define $\boldsymbol{\delta^0(D)}=\min \{\min\{d_D^+(x):x\in V(D)\},\min\{d_D^-(x):x\in V(D)\}\}$ and $\boldsymbol{\delta(D)}=\min \{d_D(x):x\in V(D)\} $ by the {\bf minimum semidegree} and the {\bf minimum degree} of $D$.} In this paper, $\boldsymbol{\Delta^0(D)}$ denotes the maximum between the maximum out-degree and the maximum in-degree of $D$.  
For $X\subseteq V(D)$, $\boldsymbol{D\setminus X}$ is the digraph obtained by deleting $X$ and all incident arcs. Given an arc set $E$ containing no 2-cycle, we define the reverse arc set
$\boldsymbol{\rev{E}}=\{uv\inD\mid vu\in E\}. $  Hence $\rev{E}$ contains one arc for each 2-cycle in $D$ which has one of its arcs in $E$, namely the opposite of that arc. 

Let \(P = x_1x_2\cdots x_t\) be a path. Denote $x_1$ (resp. $x_t$) to be the \textbf{initial} (resp. \textbf{terminal}) vertex of $P$ and that $P$ is an $(x_1,x_t)$-path. The \textbf{length} of $P$ is the number of arcs, and we denote a path of length $l$ as an \textbf{$\boldsymbol{l}$-path}. An \((x,y)\)-path \(P\) is {\bf minimal} if no shorter \((x,y)\)-path exists in the induced subdigraph \(D\langle V(P)\rangle\). Let \(Q = y_1y_2\cdots y_t\) be another path. The paths \(P\) and \(Q\) are {\bf disjoint} if \(V(P)\cap V(Q)=\emptyset\), and {\bf internally disjoint} if
\[
\{x_2,\dots,x_{t-1}\}\cap V(Q) = \emptyset \quad \text{and} \quad V(P)\cap \{y_2,\dots,y_{t-1}\}=\emptyset.
\]
And if \(y_1 = x_t\), then \(\boldsymbol{P \circ Q}\) is a walk from \(x_1\) to \(y_t\). Given a collection of paths $\mathcal{Q}$, we use $\boldsymbol{\text{\textbf{Init}} (\mathcal{Q})}$ to represent the set of all initial vertices of paths in $\mathcal{Q}$, $\boldsymbol{\text{\textbf{Ter}} (\mathcal{Q})}$ to represent the set of all terminal vertices of paths in $\mathcal{Q}$, and $\boldsymbol{\text{\textbf{Int}}(\mathcal{Q})} = V(\mathcal{Q}) \setminus (\text{Init} (\mathcal{Q}) \cup \text{Ter} (\mathcal{Q}))$. A $\boldsymbol{(v, Z,l,m)}$\textbf{-fan} in a digraph $D$ consists of a collection of $l$ internally disjoint paths starting at a common vertex $v$, where each path terminates at a distinct vertex of $Z$ and every such path has length at most $m$. A $\boldsymbol{(Z,v,l,m)}$\textbf{-fan} is defined as above, except that the paths now start in $Z$ and terminate in $v$. 



 For a vertex $v \in V(D)$ and a subset $U$ of $V(D)$, we say that $\boldsymbol{(v, U)}$ \textbf{is $k$-connected} in $D$ if for any subset $S \subseteq V(D) \setminus \{v\}$ with $|S| \leq k - 1$, there exists a path from $v$ to a vertex in $U \setminus S$ in $D \setminus S$. Similarly, we say \textbf{$\boldsymbol{(U, v)}$ is $k$-connected} in $D$ if for any subset $S \subseteq V(D) \setminus \{v\}$ with $|S| \leq k - 1$, there exists a path from a vertex in $U \setminus S$ to $v$ in $D \setminus S$.

\section{Sparse spanning  $k$-strong oriented subdigraph}

This section aims to prove Theorem \ref{main1}. To this end, we first present several preliminaries.

\subsection{Preliminaries}

Menger's theorem is one of the cornerstone results in connectivity, and we use the following easy consequence of the theorem.
\begin{theorem}\label{menger}
\emph{\cite{menger}} Let $D$ be a $k$-strong digraph. Then for every choice of disjoint sets $\{x_1, \ldots$, $x_k\}$ and $\{y_1, \ldots, y_k\}$ of $V(D)$, there exist $k$ disjoint $(x_i, y_{\pi(i)})$-paths for some permutation $\pi$ of $\{1, 2, \ldots, k\}$.
\end{theorem}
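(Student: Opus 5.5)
This consequence of Menger's theorem follows from the standard vertex version by adding a super-source and a super-sink. Let $X=\{x_1,\ldots,x_k\}$ and $Y=\{y_1,\ldots,y_k\}$, and form $D'$ from $D$ by adding a new vertex $s$ with arcs $sx_i$ for all $i\in[k]$, and a new vertex $t$ with arcs $y_jt$ for all $j\in[k]$. The goal is $k$ internally disjoint $(s,t)$-paths in $D'$. Such a family suffices: each path leaves $s$ through a distinct $x_i$ and enters $t$ through a distinct $y_j$, because $s$ has exactly $k$ out-neighbours and $t$ exactly $k$ in-neighbours. Deleting $s$ and $t$ then leaves $k$ vertex-disjoint paths in $D$, each from some $x_i$ to some $y_j$, and these pairings define the permutation $\pi$. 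If a path meets $X$ or $Y$ at more than one vertex, I shorten it to the segment from its last $X$-vertex to the first subsequent $Y$-vertex; since the paths are disjoint, this keeps the endpoints pairwise distinct. Disjointness of $X$ and $Y$ guarantees that every path has at least one arc.

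By the vertex form of Menger's theorem, and since $s$ and $t$ are non-adjacent in $D'$, it suffices to show that no set $S\subseteq V(D)$ with $|S|\le k-1$ separates $s$ from $t$ in $D'$. Given such an $S$, pick $x_i\in X\setminus S$ and $y_j\in Y\setminus S$; both exist because $|X|=|Y|=k>|S|$. Since $D$ is $k$-strong, $D\setminus S$ is strong, so it contains an $(x_i,y_j)$-path $P$. (If $x_i=y_j$ this would be trivial, but $X\cap Y=\emptyset$ rules it out.) Then $s\,x_i\,P\,y_j\,t$ is an $(s,t)$-path in $D'\setminus S$, which is the required contradiction.

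No step here is a real obstacle. The only points needing care are that $k$-strong means deleting at most $k-1$ vertices leaves the digraph strong (true since $|V(D)|\ge k+1$), and that vertex-disjointness of the paths in $D$ follows from internal disjointness in $D'$ together with the fact that distinct paths use distinct first and last vertices.
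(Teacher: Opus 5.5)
Your argument is correct and is the standard derivation: the paper gives no proof of this statement, only citing it as an easy consequence of Menger's theorem, and your super-source/super-sink reduction is the usual way to obtain it. The shortening step is superfluous and never applies, because each vertex of $X$ starts one of the $k$ pairwise disjoint paths, so no path can contain a second vertex of $X$; the same holds for $Y$.
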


We define the main tools, namely, ``nearly dominating vertex" and ``nearly dominating set". Let \( c \in \mathbb{N} \), and let \( u, v \) be two vertices of a digraph \( D \). We say $v$ is \textbf{$\boldsymbol{c}$-in-good for $\boldsymbol{u}$} in $D$ if either $v\rightarrow u$ or at least \( c \) internally disjoint \( (v, u) \)-paths of length 2 exist in \( D \). And we say $v$ is \textbf{$\boldsymbol{c}$-out-good for $\boldsymbol{u}$} in $D$ if either $u\rightarrow v$ or there exist at least $c$ internally disjoint $(u, v)$-paths of length 2 in $D$. A similar definition (i.e., $c$-good for $u$) has appeared in \cite{zhouDM26,zhou2025proof}.

 \begin{definition}\label{def2}
Given a digraph $D$, a vertex set $U$ is a \textbf{nearly in-dominating  set} of $D$ if, for every vertex $u\in U$ and every $c \in \mathbb{N}$, all but at most $2c$ vertices in $D \setminus U$ are $c$-in-good for $u$ in $D$. Symmetrically,  a vertex set $U$ is a \textbf{nearly out-dominating set} of $D$ if, for every vertex $u\in U$ and every $c \in \mathbb{N}$, all but at most $2c$ vertices in $D \setminus U$ are $c$-out-good for $u$ in $D$.
\end{definition}

We need the following lemma {from} \cite{zhou2025proof}, which establishes the existence of a  nearly in/out-dominating set in  semicomplete digraphs.

\begin{lemma}\label{key1} \emph{\cite{zhou2025proof}}
  Every semicomplete digraph $D$ {has}  a nearly out-dominating vertex {and} a nearly in-dominating vertex.
\end{lemma}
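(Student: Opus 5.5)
We choose the nearly out-dominating vertex to be a vertex of maximum out-degree, in the spirit of the classical king argument for tournaments. The nearly in-dominating vertex will then come from applying the same argument to the converse digraph. The whole proof is a single counting contradiction.

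Let $D$ be semicomplete and let $u$ be a vertex with $d^+_D(u)$ maximum. Here the set $U$ in Definition~\ref{def2} is $\{u\}$. Fix $c\in\mathbb{N}$ and let $B$ be the set of vertices $v\neq u$ that are not $c$-out-good for $u$. Every $v\in B$ satisfies $u\not\to v$, so $v\notin N^+(u)$. Since $D$ is semicomplete, this also gives $v\to u$. Moreover, the number of $(u,v)$-paths of length $2$ is $|N^+(u)\cap N^-(v)|$, and these paths are automatically internally disjoint. Hence every $v\in B$ satisfies $|N^+(u)\cap N^-(v)|\le c-1$. We must show $|B|\le 2c$, so suppose instead that $|B|\ge 2c+1$.

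Since $D\langle B\rangle$ is semicomplete, it has at least $\binom{|B|}{2}$ arcs. Its average out-degree is therefore at least $(|B|-1)/2\ge c$, so some $v\in B$ has at least $c$ out-neighbours inside $B$. We now bound $d^+(v)$ from below in three disjoint parts.
\begin{enumerate}
\item Every $w\in N^+(u)$ with $v\not\to w$ satisfies $w\to v$ by semicompleteness, so it lies in $N^+(u)\cap N^-(v)$. There are at most $c-1$ such $w$, so $v$ dominates at least $d^+(u)-(c-1)$ vertices of $N^+(u)$.
\item The vertex $v$ dominates $u$.
\item The vertex $v$ dominates at least $c$ vertices of $B$.
\end{enumerate}
The sets $N^+(u)$, $\{u\}$ and $B$ are pairwise disjoint. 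Summing the three parts gives
\[
d^+(v)\ \ge\ d^+(u)-(c-1)+1+c\ =\ d^+(u)+2\ >\ d^+(u),
\]
which contradicts the maximality of $d^+(u)$. Hence $|B|\le 2c$ for every $c$, and $\{u\}$ is a nearly out-dominating set, so $u$ is a nearly out-dominating vertex.

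For the in-version, apply the argument above to the converse digraph of $D$, which is again semicomplete. A vertex of maximum in-degree in $D$ is a vertex of maximum out-degree in the converse. Out-good vertices in the converse are exactly in-good vertices in $D$, so this vertex is nearly in-dominating in $D$.

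The only point that needs care is the disjoint-counting step. We must use semicompleteness both to get $v\to u$ and to say that a missing arc $v\not\to w$ forces $w\to v$. We also must not double-count 2-cycles; this is safe because a $w$ with both arcs present is counted in the first part as dominated by $v$.
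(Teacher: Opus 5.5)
Your proof is correct: for a vertex $u$ of maximum out-degree, every vertex $v$ that is not $c$-out-good for $u$ satisfies $v\to u$ and $|N^+(u)\cap N^-(v)|\le c-1$, and the three disjoint contributions from $N^+(u)$, $\{u\}$ and $B$ to $d^+(v)$ give $d^+(v)>d^+(u)$. The converse-digraph step is also right, since $c$-out-goodness in the converse is exactly $c$-in-goodness in $D$.

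The paper gives no proof of this lemma; it quotes it from \cite{zhou2025proof}. So there is no in-paper argument to compare with, and your maximum-out-degree (king-type) counting argument is a complete, self-contained proof.

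Your count also has slack. For $c\ge 1$, assuming only $|B|\ge 2c-1$ already gives some $v\in B$ with at least $c-1$ out-neighbours in $B$. Then $d^+(v)\ge d^+(u)-(c-1)+1+(c-1)=d^+(u)+1$, a contradiction, so in fact $|B|\le 2c-2$. For $c=1$ this is the classical fact that a vertex of maximum out-degree is a king.
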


 {Lemma~\ref{nearly} yields the short disjoint paths required for the subsequent proof. To ensure that combining these paths with the previously constructed oriented subdigraph $R$ yields an oriented subdigraph, we require these paths to avoid all reversed arcs of $R$.}

\begin{lemma}\label{nearly}
Let $l_1$ be a positive integer, $T$ a tournament, $X_{\mathrm{in}}$ a nearly in‑dominating set, and $Y_{\mathrm{out}}$ a nearly out‑dominating set of $T$. Let $X\subseteq X_{\mathrm{in}}$, $Y\subseteq Y_{\mathrm{out}}$, $W\subseteq V(T)$, and let $R$ be a digraph \emph{(}not necessarily a subdigraph of $T$\emph{)} satisfying $\Delta^0(R)\leq l_1$. Then:
\begin{itemize}
    \item[(i)] for every $Z\subseteq V(T)\setminus X_{in}$ with
    $|Z|\geq 3|X|+5l_1+2|W|$,
    there exist $|X|$ disjoint paths of length at most $2$
    from $Z$ to $X$ in $T\setminus\rev{A(R)}$, whose internal vertices lie
    outside $W$;

    \item[(ii)] for every $Z\subseteq V(T)\setminus Y_{out}$ with
    $|Z|\geq 3|Y|+5l_1+2|W|$,
    there exist $|Y|$ disjoint paths of length at most $2$
    from $Y$ to $Z$ in $T\setminus\rev{A(R)}$, whose internal vertices lie
    outside $W$.
\end{itemize}
\end{lemma}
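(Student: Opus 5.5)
We prove (i); part (ii) follows by applying (i) to the converse of $T$ and $R$, since reversal swaps nearly in- and out-dominating sets and preserves $\Delta^0(R)$ and the bound on $|Z|$. We construct the paths greedily, one vertex $x\in X$ at a time. Suppose disjoint paths $P_1,\dots,P_{j}$ of length at most $2$ have been built from $Z$ to $x_1,\dots,x_j$, with internal vertices outside $W$. Set $U=V(P_1)\cup\dots\cup V(P_j)$, so $|U|\le 3j$. We find a path for $x=x_{j+1}$ that avoids $U$ and the other vertices of $X$. Its start lies in $Z\setminus U$, its internal vertex (if any) lies outside $W\cup U\cup X$, and it uses no arc of $\rev{A(R)}$. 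Since $Z\cap X_{\mathrm{in}}=\emptyset$, the vertices of $Z$ lie in $T\setminus X_{\mathrm{in}}$, so the nearly in-dominating property applies to $x\in X\subseteq X_{\mathrm{in}}$.

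Fix a parameter $c$ and call $z\in Z\setminus U$ \emph{bad} if it is not $c$-in-good for $x$. There are at most $2c$ bad vertices. Now take a good $z$. If $z\to x$ in $T$ and $zx\notin \rev{A(R)}$, we use the $1$-path $zx$. Since $zx\in\rev{A(R)}$ means $xz\in A(R)$, this fails for at most $d_R^+(x)\le l_1$ vertices $z$. Otherwise, because $T$ is a tournament, $x\to z$, so $z$ is good through $c$ internally disjoint $2$-paths $zwx$. We must discard middle vertices $w$ with $w\in W\cup U\cup X$, $zw\in\rev{A(R)}$ or $wx\in\rev{A(R)}$. The last condition kills at most $d^+_R(x)\le l_1$ values of $w$, and the middle one at most $d^-_R(z)\le l_1$. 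Hence it suffices that $c > |W|+|U|+|X|+2l_1$.

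Taking $c=|W|+|U|+|X|+2l_1+1$ would make the bound on the number of bad vertices too large, and this accounting is the main obstacle. The remedy is not to insist on a $2$-path through a generic good $z$. The $2c$ exceptional vertices are only needed while $c$ is small, and a counting argument over all of $Z\setminus U$ is used instead. Those $z$ lying in $N^-_T(x)$ with $zx\notin\rev{A(R)}$ give $1$-paths directly. For the remaining $z$, choose $c$ just above the number of forbidden middle vertices that depend on $z$ alone, which is $d_R^-(z)\le l_1$. The forbidden middle vertices common to all $z$, namely $W\cup U\cup X$ together with $N^+_R(x)$, are handled separately. Every $2$-path $zwx$ has $w\in N^-_T(x)\setminus Z'$, where $Z'$ is the set of already used or forbidden vertices. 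If $N^-_T(x)\cap (Z\setminus U)$ already contains at least $l_1+1$ vertices, one of them gives a usable $1$-path. Otherwise $Z\setminus U$ consists, apart from at most $l_1$ vertices, of out-neighbours of $x$. Each of these out-neighbours has many internally disjoint $2$-paths back to $x$.

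A pigeonhole argument now finishes the step. Each common forbidden vertex $w$ can block only one $2$-path per $z$. Suppose every good $z$ were blocked. Then each of the roughly $|Z|-3j-2c-l_1$ good vertices has at least $c-l_1$ of its paths hitting the common forbidden set $F$, where $|F|\le |W|+3j+|X|+l_1$. I expect that a suitable choice such as $c=l_1+1$, together with an averaging argument, produces the constant $3|X|+5l_1+2|W|$ once $j\le |X|-1$ is substituted. Verifying this bookkeeping is the delicate step.
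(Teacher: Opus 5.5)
There is a genuine gap: the step you leave open is the whole content of the lemma, and the remedy you sketch would not close it.

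\textbf{What is correct.} Your preliminary accounting is right. Reducing (ii) to (i) by reversing all arcs is fine. At most $d^+_R(x)\le l_1$ starting vertices $z$ lose the arc $zx$ to $\rev{A(R)}$. For a fixed $z$, at most $d^-_R(z)+d^+_R(x)\le 2l_1$ of its internally disjoint $2$-paths $zwx$ use a reversed arc. You also diagnose correctly why plain greedy fails. Insisting that the new path avoid the set $U$ of vertices on the already built paths forces $c>|W|+|U|+|X|+2l_1$. With $|U|$ up to $3(|X|-1)$, the $2c$ exceptional vertices then cost far more than $3|X|+5l_1+2|W|$.

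\textbf{Why the remedy fails.} With $c=l_1+1$, a good $z$ is guaranteed only one $2$-path avoiding $N^-_R(z)$. Nothing prevents these surviving $2$-paths, for \emph{all} candidates $z$, from going through one and the same vertex $w\in F$, for instance a middle vertex of an earlier path. $2$-paths from different starting vertices to $x$ need not be disjoint from each other. So ``each $w$ blocks at most one path per $z$'' yields no contradiction. The nearly in-dominating condition at a single small value of $c$ is perfectly consistent with every candidate being blocked, and no averaging over $z$ changes that. What is missing is a way to make the required $c$ independent of $U$.

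\textbf{The missing idea: augment instead of avoid.} This is how the paper proceeds.
\begin{enumerate}
\item Take $c=|X|+2l_1+|W|$ once and for all. Then for every $x\in X$, at least $|Z|-2c-l_1\ge |X|$ vertices $v\in Z$ have either $vx\in A(T\setminus\rev{A(R)})$, or $|X|$ internally disjoint $2$-paths $vu_jx$ in $T\setminus\rev{A(R)}$ with all $u_j\notin W$. This is exactly where the bound $3|X|+5l_1+2|W|$ comes from.
\item Keep the current $\ell<|X|$ paths truncated so that no internal vertex lies in $X\cup Z$. Then they use only $\ell$ vertices of $Z$, and you can pick such a $v$ off them for an uncovered $x$.
\item If some $u_j$ is off the current paths, add $vu_jx$. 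Use the subpath $u_jx$ instead if $u_j\in Z$, or $vu_j$ if $u_j\in X$.
\item Otherwise all $|X|>\ell$ vertices $u_j$ lie on the $\ell$ current paths. By pigeonhole some $Q_i$ contains two of them, and you reroute. For example, if $Q_i=z_iw_ix_i$ with $z_i,w_i\in\{u_j\}$, replace $Q_i$ by $vw_ix_i$ and add $z_ix$. The cases $\{w_i,x_i\}$, $\{z_i,x_i\}$ and $Q_i=z_ix_i$ are handled the same way, using only arcs of $Q_i$ and of the $2$-paths $vu_jx$.
\end{enumerate}
This swap step is what lets the count of forbidden middle vertices exclude $U$ entirely, and it is the idea your proposal lacks.
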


\begin{proof}
We prove (i); (ii) follows by reversing all arcs. Set $m:=|X|$ and
$H:=T\setminus\rev{A(R)}$.

Fix $x\in X$. Applying Definition~\ref{def2} with
$c=m+2l_1+|W|$
gives a set $Z'\subseteq Z$ such that
$|Z'|\geq |Z|-2(m+2l_1+|W|)\geq m+l_1$,
and, for every $v\in Z'$, either $vx\in A(T)$ or there are at least
$m+2l_1+|W|$ internally disjoint $2$-paths from $v$ to $x$. At most $l_1$ vertices $v\in Z'$ satisfy
$vx\in\rev{A(R)}$, since
$d^-_{\rev R}(x)=d_R^+(x)\leq l_1$. Hence there is a set
$Z_x\subseteq Z'$ with
$|Z_x|\geq m$
such that, for every $v\in Z_x$, either $vx\in A(H)$, or $vx\notin A(T)$.
In the latter case, among the
$m+2l_1+|W|$ internally disjoint $2$-paths from $v$ to $x$, at most
$2l_1$ use an arc of $\rev{A(R)}$ and at most $|W|$ have their internal
vertex in $W$. Thus $H$ contains at least
$m+2l_1+|W|-2l_1-|W|=m$
internally disjoint $2$-paths from $v$ to $x$ whose internal vertices lie
outside $W$. Therefore,
\begin{equation}\label{eq1}
    \begin{aligned}
        &\text{\ \ \ \ \ \ \ for every $x\in X$, there are at least $m$ vertices
$v\in Z$ }\\
&\text{\ \ \ \ such that either $vx\in A(H)$, or $H$ contains $m$ internally}\\
&\text{disjoint $2$-paths from $v$ to $x$ with
internal vertices outside $W$.}
    \end{aligned}
\end{equation}


We now construct the desired paths by induction. Suppose that
$\ell<m$ pairwise disjoint paths
$Q_1,\ldots,Q_\ell$
have been found in $H$, each of length at most $2$, from $Z$ to  $x_1,\ldots,x_\ell\in X$, with all internal vertices outside $W$.
By replacing a path with a suitable subpath if necessary, we may also assume that no internal vertex lies in $X\cup Z$. Choose
$x_{\ell+1}\in X\setminus\{x_1,\ldots,x_\ell\}$.
By \eqref{eq1}, since the paths $Q_1,\ldots,Q_\ell$ use only $\ell<m$ vertices
of $Z$, we may choose
$v\in Z\setminus\bigcup_{i=1}^{\ell}V(Q_i)$
satisfying \eqref{eq1} for $x_{\ell+1}$. If $vx_{\ell+1}\in A(H)$, we simply add the path $vx_{\ell+1}$. Otherwise, there are $m$
internally disjoint $2$-paths
$vu_jx_{\ell+1}$, $j\in[m]$,
in $H$, where the $u_j$ are distinct and lie outside $W$. If some $u_j\notin\bigcup_{i=1}^{\ell}V(Q_i)$, then we add $vu_jx_{\ell+1}$;
When $u_j\in Z$ or $u_j\in X$, we instead add the subpath $u_jx_{\ell+1}$ or $vu_j$, respectively. Hence we may assume that every $u_j$ lies on one of
$Q_1,\ldots,Q_\ell$.

Since $m>\ell$, some $Q_i$ contains at least two of the vertices $u_j$.
Write either
$Q_i=z_ix_i$
or
$Q_i=z_iw_ix_i$.
In the first case, necessarily $z_i,x_i\in\{u_j:j\in[m]\}$, and we replace
$Q_i$ by $vx_i$ and add a new path $z_ix_{\ell+1}$.
In the second case, at least two of $z_i,w_i,x_i$ belong to
$\{u_j:j\in[m]\}$. According to the pair chosen,
\begin{itemize}
    \item let $Q_i:=vw_ix_i$ and let $Q_{\ell+1}:= z_ix_{\ell+1}$, if  $z_i, w_i\in\{u_j:j\in[m]\}$;
    \item let $Q_i:=vx_i$ and let $Q_{\ell+1}:= z_iw_ix_{\ell+1}$, if  $w_i, x_i\in\{u_j:j\in[m]\}$;
    \item let $Q_i:=vx_i$ and let $Q_{\ell+1}:= z_ix_{\ell+1}$, if  $x_i, z_i\in\{u_j:j\in[m]\}$.
\end{itemize}
In each case, every arc of the two replacement paths belongs either to $Q_i$
or to one of the paths $vu_jx_{\ell +1}$. Hence both paths lie in $H$. They are vertex-disjoint from each other and from every $Q_j$ with $j\neq i$, have length at most $2$, and have no internal vertex in $W$. Thus any family of $\ell<m$ such paths can be augmented to one of size $\ell+1$. Repeating this yields $m=|X|$ pairwise vertex-disjoint paths from $Z$ to $X$, as required.
\end{proof}

 Lemma \ref{order-exists}, which is a key {tool} for the proof of Lemma \ref{good graph}, gives a nice ordering of the vertices of an oriented graph.
\begin{lemma}\label{order-exists}
  \emph{\cite{kang2017}} Let $D$ be an oriented graph of order $n$ with $\delta(D) \ge n - 1-s$. Then there exists an ordering $\sigma = (v_1, \dots, v_n)$ of $V(D)$ that satisfies the following:
\begin{enumerate}
\item[$(Q1_s)$] For any $i, j \in [n]$ with $i < j$, $v_i$ has at least $\frac{j-i-s}{2}$ out-neighbours in $\{v_{i+1}, \dots, v_j\}$;
\item[$(Q2_s)$] For any $i, j \in [n]$ with $i < j$, $v_j$ has at least $\frac{j-i-s}{2}$ in-neighbours in $\{v_i, \dots, v_{j-1}\}$.
\end{enumerate}
\noindent{}Moreover, an ordering $\sigma$ as above can be constructed in polynomial time.
\end{lemma}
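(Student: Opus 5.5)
This is the ordering lemma of Kang, Kim, Kim and Suh \cite{kang2017}. The plan is to take a \emph{median-type ordering}: among all orderings $\sigma=(v_1,\dots,v_n)$ of $V(D)$, choose one that maximizes the number of forward arcs, i.e.\ arcs $v_iv_j$ with $i<j$. The proof then has two parts: show that a maximizer satisfies $(Q1_s)$ and $(Q2_s)$, and make the construction polynomial.

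The first step is the classical local property of a maximizer. For any $i<j$, moving $v_i$ to the position just after $v_j$ must not increase the number of forward arcs. This move only changes the status of arcs between $v_i$ and $\{v_{i+1},\dots,v_j\}$. Hence
\[
d^+_D(v_i,\{v_{i+1},\dots,v_j\})\ \ge\ d^-_D(v_i,\{v_{i+1},\dots,v_j\}).
\]
Since $D$ is oriented, there is at most one arc between $v_i$ and each other vertex. Since $\delta(D)\ge n-1-s$, $v_i$ has at most $s$ non-neighbours. Therefore
\[
d^+ + d^- \ \ge\ (j-i)-s \quad\text{on } \{v_{i+1},\dots,v_j\},
\]
which gives $d^+\ge (j-i-s)/2$. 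This is $(Q1_s)$.

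Symmetrically, moving $v_j$ to the position just before $v_i$ shows that $v_j$ has at least as many in-neighbours as out-neighbours in $\{v_i,\dots,v_{j-1}\}$. Combined with the same non-neighbour count, this gives $(Q2_s)$.

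The main obstacle is polynomiality, because finding a true maximizer (a minimum feedback arc set) is NP-hard. I would therefore use only the local optimality that the argument actually needs. Start from any ordering. Whenever some move ``$v_i$ to just after $v_j$'' or ``$v_j$ to just before $v_i$'' strictly increases the number of forward arcs, perform it. Each move increases this integer quantity by at least one, and it is bounded by $|A(D)|\le\binom n2$. So there are at most $O(n^2)$ moves, and each candidate move can be checked in polynomial time, $O(n^3)$ candidates per round. The resulting ordering admits none of these improving moves, so the inequalities above hold for every pair $i<j$ and both properties follow.
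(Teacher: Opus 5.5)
Your proof is correct: an ordering with no improving single-vertex move has $d^+\ge d^-$ for $v_i$ on $\{v_{i+1},\dots,v_j\}$ and $d^-\ge d^+$ for $v_j$ on $\{v_i,\dots,v_{j-1}\}$, which with the at most $s$ non-neighbours gives $(Q1_s)$ and $(Q2_s)$, and the local search stops after at most $\binom n2$ improvements. The paper gives no proof and only cites Kang, Kim, Kim and Suh \cite{kang2017}; your argument is essentially the standard (local) median-order argument behind that result. The one slip is cosmetic and does not affect polynomiality: there are $O(n^2)$ candidate moves per round, each checkable in $O(n)$ time, not $O(n^3)$ candidates.
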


Lemma \ref{lemma4} gives a {lower bound on the size of a} matching in a bipartite graph.

\begin{lemma}[See Claim 3.2 in \cite{kang2017}] \label{lemma4}

   {Let $s \ge 0$ {be} an integer and} let $G$ be a bipartite graph with bipartition $A \cup B$, where $A = \{a_1, \dots, a_n\}$ and $B = \{b_1, \dots, b_n\}$, satisfying the following conditions\emph{:}
\begin{enumerate}[label=(P\arabic*$_s$)]
    \item For all $i, j \in [n]$ with $i < j$,
    $|N_G(a_i) \cap \{b_{i+1}, \dots, b_j\}| \ge \frac{j-i-s}{2}$\emph{;}

    \item For all $i, j \in [n]$ with $i < j$,
    $|N_G(b_j) \cap \{a_i, \dots, a_{j-1}\}| \ge \frac{j-i-s}{2}$.
\end{enumerate}
Then $G$ contains a matching of size at least $n-s-1$.
\end{lemma}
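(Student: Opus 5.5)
We argue via König's theorem. It suffices to show that every vertex cover of $G$ has at least $n-s-1$ vertices. Suppose instead that $C$ is a vertex cover with $|C|\le n-s-2$. Put $I=\{i: a_i\notin C\}$ and $J=\{j: b_j\notin C\}$. Then
\[
|I|+|J| = 2n-|C| \ge n+s+2 .
\]
Since $C$ covers every edge, there is no edge between $\{a_i:i\in I\}$ and $\{b_j:j\in J\}$. We derive a contradiction by choosing extremal indices
\[
i_0=\min I,\qquad j_0=\max J .
\]

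\emph{Case $i_0\ge j_0$.} Every index of $J$ is at most $j_0$, and every index of $I$ is at least $i_0\ge j_0$. Hence $J\subseteq[j_0]$ and $I\subseteq[i_0,n]$. This gives
\[
|I|+|J|\le (n-i_0+1)+j_0\le n+1 < n+s+2,
\]
a contradiction.

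\emph{Case $i_0<j_0$.} Apply (P1$_s$) to $a_{i_0}$ on the window $\{b_{i_0+1},\dots,b_{j_0}\}$. This gives at least $(j_0-i_0-s)/2$ neighbours there, and none of them has its index in $J$. So the number of indices of $J$ in $(i_0,j_0]$ is at most
\[
(j_0-i_0)-\frac{j_0-i_0-s}{2}=\frac{j_0-i_0+s}{2}.
\]
Symmetrically, (P2$_s$) applied to $b_{j_0}$ on the window $\{a_{i_0},\dots,a_{j_0-1}\}$ shows $|I\cap[i_0,j_0)|\le (j_0-i_0+s)/2$.

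Now split each set at the extremal index. We have $J\subseteq[1,i_0]\cup(i_0,j_0]$, because $j_0=\max J$. We also have $I\subseteq[i_0,j_0)\cup[j_0,n]$, because $i_0=\min I$. Therefore
\[
|J|\le i_0+\frac{j_0-i_0+s}{2},\qquad |I|\le \frac{j_0-i_0+s}{2}+(n-j_0+1).
\]
Adding these gives $|I|+|J|\le n+s+1<n+s+2$, again a contradiction.

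Hence every vertex cover has size at least $n-s-1$, and by König's theorem $G$ has a matching of that size. The only delicate point is the choice of the two extremal indices. Taking the first uncovered vertex on the $A$-side and the last uncovered vertex on the $B$-side lets one window of each of (P1$_s$) and (P2$_s$) control everything. The remaining indices lie outside those windows and are bounded trivially, so the overlap at the endpoints costs only the additive $1$.
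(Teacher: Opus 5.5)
Your proof is correct and complete. The paper gives no proof of this lemma; it quotes Claim 3.2 of Kang, Kim, Kim and Suh~\cite{kang2017}. Your K\"onig/vertex-cover argument is therefore a self-contained justification of what the paper imports. Equivalently, when $i_0<j_0$ your count says
\[
|C|\ge (i_0-1)+(n-j_0)+(j_0-i_0-s)=n-s-1 .
\]
Two points you use implicitly are worth stating explicitly:
\begin{itemize}
\item $I$ and $J$ are nonempty, so $i_0$ and $j_0$ exist. This holds because $|I|,|J|\le n$ while $|I|+|J|\ge n+s+2$.
\item The window bound $|J\cap(i_0,j_0]|\le (j_0-i_0+s)/2$ stays valid when $(j_0-i_0-s)/2\le 0$. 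It only uses that the number of neighbours of $a_{i_0}$ in the window is at least that quantity, and likewise for $b_{j_0}$.
\end{itemize}
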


Let $D$ be a digraph and let  $\sigma=(v_1,\dots,v_n)$ be an ordering of $V(D)$. An  arc $v_i\to v_j$ is called \textbf{$\boldsymbol{\sigma}$-forward} if $i<j$, and \textbf{$\boldsymbol{\sigma}$-backward} if $j<i$. Furthermore, for integers $1\leq a\leq b\leq n$ we let $\boldsymbol{\sigma(a,b)}:=\{v_\ell : a\le \ell\le b,\ \ell\in [n]\}$.  Thus $\sigma(a,b)=\emptyset$ if $a>b$.

\begin{definition}\label{def3}
For two nonnegative integers $c$ and $s$,  an $n$-vertex digraph $D'$ and an ordering $\sigma =(v_1,\dots,v_n)$ of $V(D')$, we say $D'$ is \textbf{$\boldsymbol{(\sigma,c,s)}$-nice} if it satisfies the following.
\begin{itemize}
    \item[$(D1)$] Every arc in $D'$ is $\sigma$-forward {In particular, $D'$ is acyclic.};
    \item[$(D2)$] For each vertex $v\in \sigma(n-2c - s+2, n) $, there are at most $2c+s$  arcs  entering $v$ in $D'$; For each vertex $v\in \sigma(1, 2c + s - 1)$, there are at most $2c+s$  arcs leaving $v$ in $D'$;
    \item[$(D3)$] Every vertex in $\sigma(1,n-2c-s+1)$ has out-degree at least $c$ in $D'$, and every vertex in $\sigma(2c+s,n)$ has in-degree at least $c$ in $D'$;
    \item[$(D4)$] $|A(D')| \le cn + 9c^2 + 5sc-4c$;
    \item[$(D5)$] For {every} $w \in V(D')$, both $(w, \sigma(n-2c - s+2, n) )$ and $(\sigma(1, 2c + s - 1), w)$ are $c$-connected in $D'$.
\end{itemize}
\end{definition}

{We shall use Lemma \ref{good graph} below
to control the number of arcs and meet degree constraints.} 
The main proof ideas follow from Claim 3.1 and Lemma 3.4 in \cite{kang2017}, and relevant results are also presented in \cite{kang2021}. To guarantee the property {that the digraph $D'$ below has no 2-cycles}, we make appropriate modifications to the original argument and derive a slightly stronger conclusion.

\begin{lemma}\label{good graph}  Let $c,s$ be nonnegative integers and let $n$ be a positive integer. Let $D$ be an oriented graph {on $n$ vertices} such that $\delta(D) \geq n - s - 1$. Then, {in polynomial time, we can construct }a spanning {oriented subdigraph} $D'$ of $D$ and an ordering $\sigma$ of $V(D)$ such that $D'$ is $(\sigma,c,s)$-nice.
\end{lemma}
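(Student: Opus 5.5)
Since $D$ is an oriented graph on $n$ vertices with $\delta(D)\ge n-s-1$, Lemma~\ref{order-exists} applies directly. It gives, in polynomial time, an ordering $\sigma=(v_1,\dots,v_n)$ satisfying $(Q1_s)$ and $(Q2_s)$. The plan is to follow Claim~3.1 and Lemma~3.4 of \cite{kang2017}: build $D'$ as a union of $c$ ``layers'', each consisting of $\sigma$-forward arcs of $D$ chosen via matchings from Lemma~\ref{lemma4}.

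\textbf{Choice of arcs.} Every arc of $D'$ will be an arc of $D$ that is $\sigma$-forward. Hence $(D1)$ holds automatically, and $D'$ is an oriented subdigraph because $D$ itself has no $2$-cycles. For each shift $t\in[c]$, I would form a bipartite graph $G_t$ with sides $A=\{a_1,\dots\}$ and $B=\{b_1,\dots\}$, joining $a_i$ to $b_j$ when $v_iv_j\in A(D)$, $j>i$, and the arc has not yet been used. $(Q1_s)$ and $(Q2_s)$ give $(P1_{s'})$ and $(P2_{s'})$ with a slightly enlarged parameter $s'=s+2(t-1)$, since removing earlier matchings costs at most one neighbour per previous layer in each interval. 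Lemma~\ref{lemma4} then yields a matching $M_t$ of size at least $n-s'-1$. Each $M_t$ gives every vertex out-degree at most $1$ and in-degree at most $1$. The union of the $M_t$ therefore has at most $cn$ arcs, and every vertex has in- and out-degree at most $c$ from these arcs.

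\textbf{Repairing degrees at the ends.} The vertices missed by the matchings lie essentially among the first or last $2c+s$ positions, or are few in number overall, about $\sum_t(s+2t)=O(c^2+sc)$. For each deficient vertex $v_i$ with $i\le n-2c-s+1$, I add extra forward out-arcs from $D$ until its out-degree is $c$. These exist by $(Q1_s)$ with $j=n$, since $(n-i-s)/2\ge c$. Deficient in-degrees for $i\ge 2c+s$ are handled symmetrically using $(Q2_s)$. I count the added arcs as at most $c$ per missed vertex slot. This gives $(D3)$ and the bound $(D4)$, $cn+9c^2+5sc-4c$, after careful bookkeeping. For $(D2)$, the last $2c+s-1$ vertices receive at most $c$ matching in-arcs plus only the repair arcs aimed at them. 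I would direct repair arcs away from the extreme vertices where possible, choosing targets among the earliest available out-neighbours, and bound the remaining in-degree there by $2c+s$. The out-degree of the first $2c+s-1$ vertices is bounded symmetrically.

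\textbf{Connectivity $(D5)$.} Take $w$ and a set $S$ with $|S|\le c-1$ and $w\notin S$. If $w$ is among the last $2c+s-1$ vertices, the path is trivial. Otherwise $w$ has at least $c$ out-neighbours in $D'$, all later in $\sigma$, so one of them lies outside $S$. Repeating this strictly increases the index, so the process terminates in $\sigma(n-2c-s+2,n)$. The in-direction is symmetric.

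The main obstacle is the simultaneous accounting for $(D2)$ and $(D4)$. The repair arcs must be placed so that vertices near the ends of $\sigma$ do not collect too many arcs, while the total excess stays within $9c^2+5sc-4c$. I would first try the cleanest route: prove that each matching $M_t$ covers all $a_i$ with $i\le n-s'-1$ and all $b_j$ with $j\ge s'+2$, which is what the proof of Claim~3.2 gives. If this holds, the deficiencies are confined to the end segments, and the counting becomes explicit.
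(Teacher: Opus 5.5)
Your framework matches the paper's: the ordering from Lemma~\ref{order-exists}, $c$ successive matchings from Lemma~\ref{lemma4}, forward arcs only, and the same $(D5)$ argument. With it, $(D1)$, $(D3)$ and $(D4)$ go through. However, $(D2)$ is not proved, and the route you propose for it fails. Lemma~\ref{lemma4} (Claim~3.2 of \cite{kang2017}) controls only the \emph{size} of the matching, not which vertices it covers, and the covering property you hope for is false. Take $n=5$ and $s'=s=2$. The bipartite graph with edges $a_1b_3,a_2b_3,a_3b_4,a_4b_5$ satisfies $(P1_2)$ and $(P2_2)$. It even arises as $H_0$ for the oriented graph with arcs $v_1v_3,v_2v_3,v_3v_4,v_4v_5,v_5v_1,v_5v_2$, which has $\delta=2=n-s-1$ and for which $(v_1,\dots,v_5)$ satisfies $(Q1_2)$ and $(Q2_2)$. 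Yet no matching covers both $a_1$ and $a_2$, although $1,2\le n-s'-1$.

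So deficient vertices need not sit at the ends of $\sigma$, and then nothing in your proposal bounds the in-degree of a vertex $v_j\in\sigma(n-2c-s+2,n)$. Such a vertex may already receive $c$ long matching arcs. On top of that it receives out-repair arcs, some of them forced: a vertex in position $n-2c-s+1$ may have exactly $c$ forward out-neighbours in $D$, all in the last block. Your repairs chosen via $(Q1_s)$ with $j=n$ can also be long. ``Directing repair arcs away from the extreme vertices where possible'' is not an argument that the total stays at most $2c+s$. The same problem arises for out-degrees in $\sigma(1,2c+s-1)$.

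The missing idea is the paper's truncation step combined with short repairs.
\begin{itemize}
\item \textbf{Truncate.} From the union of the matchings, delete every arc whose tail lies in $\sigma(1,2c+s-1)$ or whose head lies in $\sigma(n-2c-s+2,n)$. Since every vertex has matching in- and out-degree at most $c$, this loses at most $2c(2c+s-1)$ arcs. The remaining digraph $D_1$ therefore has $|A(D_1)|\ge cn-5c^2-3sc+2c$.
\item \textbf{Repair with short arcs.} Restore out-degree $c$ on $\sigma(1,n-2c-s+1)$ and in-degree $c$ on $\sigma(2c+s,n)$ using only arcs whose endpoints have indices differing by at most $2c+s-1$. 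Enough such arcs exist: $(Q1_s)$ with $j=i+2c+s-1$ and $(Q2_s)$ with $i=j-2c-s+1$ give at least $c$ forward neighbours inside each window.
\item \textbf{$(D2)$.} Every out-arc of a first-block vertex and every in-arc of a last-block vertex is now a short repair arc. These degrees are therefore at most the window size $2c+s-1$.
\item \textbf{$(D4)$.} The repair arcs in each direction number at most $cn-|A(D_1)|\le 5c^2+3sc-2c$. This truncation cost is exactly where the term $9c^2+5sc-4c$ in $(D4)$ comes from.
\end{itemize}
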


\begin{proof}
{ If $c=0$, take $\sigma$ to be an arbitrary ordering and let $D'$ be the arcless spanning digraph. Conditions (D1)--(D4) are immediate and (D5) is vacuous; hence the conclusion holds. We may therefore assume that $c\geq1$.\unboldmath}   If $n \leq 2c+s-1$, then take $\sigma$ to be an arbitrary ordering of $V(D)$ and let $D'$ be the arcless digraph on $V(D)$. The conclusion holds trivially. Thus we may assume that $n \ge 2c+s $. By Lemma \ref{order-exists}, we can find an ordering $\sigma = (v_1, \dots, v_n)$ of $V(D)$ which satisfies conditions ($Q1_s$) and ($Q2_s$).
   We {now} consider an auxiliary bipartite graph $H_0$ with a bipartition $A \cup B$, where $A = \{v_1, \dots, v_n\}$ and $B = \{v_1', \dots, v_n'\}$, such that $v_i v_j' \in H_0$ if and only if $v_i v_j$  is a $\sigma$-forward arc of $D$ (i.e., $i < j$ and $v_i v_j \in A(D)$). Note that the conditions (Q1$_s$)-(Q2$_s$) imply that the graph $H_0$ satisfies conditions (P1$_s$)-(P2$_s$).

Assume that we have already constructed a subgraph $H_\ell$ satisfying conditions (P1$_{s+2\ell}$)-(P2$_{s+2\ell}$). By Lemma \ref{lemma4} with $G:=H_\ell$, $H_\ell$ contains a matching $M_\ell$ of size at least $n -s- 2\ell - 1$. Let $H_{\ell+1} := H_\ell \setminus M_\ell$. Then for any $i, j \in [n]$, we have $|N_{H_\ell}(v_i) \setminus N_{H_{\ell+1}}(v_i)| \le 1$ and $|N_{H_\ell}(v'_j) \setminus N_{H_{\ell+1}}(v'_j)| \le 1$. Thus the graph $H_{\ell+1}$ satisfies conditions (P1$_{s+2\ell+2}$)-(P2$_{s+2\ell+2}$). Repeating this process for $0 \le \ell \le c-1$ provides edge-disjoint matchings $M_0, M_1, \dots, M_{c-1}$ of $H_0$ where the size of $M_\ell$ is at least $n  -s- 2\ell - 1$ for $0 \le \ell \le c-1$. By deleting some edges, we may assume that
\begin{equation}\label{eee1}
    |E(M_\ell)| = n - s - 2\ell - 1 \text{ for }0 \le \ell \le c-1.
\end{equation}
Let $M$ be the subgraph of $H_0$ such that $E(M) := \bigcup_{\ell=0}^{c-1} E(M_\ell)$ and let $D_1$ be the spanning subdigraph of $D$ such that
\begin{equation}\label{eqq1}
    \begin{aligned}
    A(D_1):=&\{v_iv_j\in A(D)\mid v_iv_j'\in E(M),\  i\notin [1,2c+s-1] \text{ {and} } j\notin [n-2c-s+2,n]\}.
\end{aligned}
\end{equation}
Then by construction of $H_0$, every arc of $D_1$ is a $\sigma$-forward arc and
\begin{equation}\label{eq:3.2}
    \Delta(M) \leq {c} \quad \text{and} \quad |E(M)| = \sum_{\ell=0}^{{c-1}} |E(M_\ell)| \stackrel{(\ref{eee1})}{=} cn - c^2 - sc.
\end{equation}
Also this implies that
\begin{equation}\label{eq:3.3}
    \begin{aligned}
        \Delta^+(D_1) \leq c, \  \Delta^-(D_1) \leq c,\
        d^-_{D_1}(v_i) \leq \min\{c, i-1\}  \text{ and }  d^+_{D_1}(v_i) \leq \min\{c, n-i\},
    \end{aligned}
\end{equation}
and thus
\begin{equation}\label{eq:3.4}
cn - c^2 - sc\geq   |A(D_1)| \stackrel{(\ref{eqq1})}{\geq}  cn - c^2 - sc - 2(2c+s-1)c=cn-5c^2-3sc+2c.
\end{equation}

 By (Q2$_s$), for each {vertex $v_i$ with} $2c + s \leq i \leq n$, the number of $\sigma$-forward arcs  from $ \{v_{i-2c-s+1},\ldots$, $v_{i-1}\} $ to $v_i$ is at least $\left\lceil \frac{i-(i-2c-s+1)-s}{2} \right\rceil  {=} {c}$. Similarly, by (Q1$_s$),for each $1\leq i \leq n-2c-s+1 $, the number of $\sigma$-forward arcs from $v_i$ to $ \{v_{i+1},\ldots, v_{i+2c+s-1}\} $ is at least $\left\lceil \frac{i+2c+s-1-i-s}{2} \right\rceil  {=} {c}$. Next, we select the sets ${A_i^-}$ and ${A_i^+}$ in two successive steps. 
 Firstly, for each $2c + s \leq i \leq n-2c-s+1$, we can choose a set ${A_i^-}$ of $\sigma$-forward arcs from $ \sigma (i-2c-s+1, i-1) $ to $v_i$ such that ${A_i^-} \subseteq A(D) \setminus A(D_1)$ and $|{A_i^-}| = c - d^-_{D_1}(v_i)$. Then choose a set ${A_i^+}$ of $\sigma$-forward arcs from $v_i$ to $ \sigma (i+1,  i+2c+s-1) $ such that ${A_i^+}  \subseteq A(D) \setminus A(D_1)$ and $|{A_i^+}| = c - d^+_{D_1}(v_i)$. Define the spanning subdigraph $D_2 $ of $ D$ {by setting}
\[
\begin{aligned}
    A(D_2) := A(D_1) \cup \bigcup_{i=2c+s}^{n-2c-s+1} ({A_i^-} \cup {A_i^+}).
\end{aligned}
\]
Secondly, for each $1 \leq i \leq \min \{2c + s-1, n-2c-s+1\}$, we choose a set ${A_i^+}$ of $\sigma$-forward arcs from $v_i$ to $ \sigma (i+1,i+2c+s-1) $ such that ${A_i^+} \subseteq A(D) \setminus A(D_2)$ and $|{A_i^+}| =\max \{0, c - d^+_{D_2}(v_i)\}$. And for each $\max \{2c + s, n-2c-s+2\} \leq i \leq n$, we choose a set ${A_i^-}$ of $\sigma$-forward arcs  {entering} $v_i$  {from} $ \sigma (i-2c-s+1, i-1) $ such that ${A_i^-} \cap A(D_2) = \emptyset$ and $|{A_i^-}| =\max \{0, c - d^-_{D_2}(v_i)\}$.  All the remaining undetermined \({A_i^+}\) and \({A_i^-}\) are defined as empty sets. Define {the} spanning digraph $D' $ of $ D$ {by setting}
\[
\begin{aligned}
    A(D') :=A(D_1) \cup \bigcup_{i=1}^{n} ( {A_i^+}\cup {A^-_i}).
\end{aligned}
\]

Note that, since every arc of $D'$ is a forward arc with respect to $\sigma$, $D'$ is an {acyclic} oriented graph. One can easily check that (D1) and (D3) in Definition \ref{def3} hold. Moreover, by the definition of $D_1$, $d^+_{D_1}(v_i)=0$ for $1\le i\le 2c+s-1$, and
$d^-_{D_1}(v_i)=0$ for $n-2c-s+2\le i\le n$. 
 Furthermore, every arc contained in some ${A_i^+}$ or ${A_i^-}$ joins two vertices whose indices differ by at most $2c+s-1$. Hence
$d^+_{D'}(v_i)\le 2c+s-1$ for $1\le i\le 2c+s-1$,
and
$d^-_{D'}(v_i)\le 2c+s-1$ for $n-2c-s+2\le i\le n$. Thus $(D2)$ holds. Since the ordering in Lemma \ref{order-exists} can be found in polynomial time and each matching $M_\ell$ can be obtained by a standard polynomial-time algorithm for maximum matching in bipartite graphs, the whole construction can be carried out in polynomial time.

We start by bounding the number of arcs in $D'$. Since {$A(D_1)\subseteq A(D_2)$} and $\Delta^-(D_1)\le c$, for every
$i\in[n]$ we have
\[
\max\{0,c-d^-_{D_2}(v_i)\}
\le c-d^-_{D_1}(v_i).
\]
Consequently,
\begin{align*}
\left|\bigcup_{i=1}^{n}{A_i^-}\right|\le
\sum_{i=1}^{n}\left(c-d^-_{D_1}(v_i)\right)=
cn-\sum_{i=1}^{n}d^-_{D_1}(v_i)=
cn-|A(D_1)| \stackrel{(\ref{eq:3.4})}{\leq }
5c^2+3sc-2c,
\end{align*}
where we used $|A(D_1)| = \sum_{i=1}^{n} d_{D_1}^{-}(v_i)$. Similarly, we also have $\left|\bigcup_{i=1}^{n} {A_i^+}\right| \leq 5c^2+3sc-2c$. Thus we have
\[
\begin{aligned}
|A(D')| &\leq |A(D_1)| + \left|\bigcup_{i=1}^{n} ({A_i^-}\cup {A_i^+})\right| \\
&\stackrel{(\ref{eq:3.4})}{\leq} cn - c^2 - sc + 2(5c^2+3sc-2c ) = cn + 9c^2 + 5sc-4c.
\end{aligned}
\]
Hence {(D4)} holds.

We now verify {that the} digraph \(D'\) {satisfies} ({D5)}, which is to prove that for any subset $S \subseteq V(D) $ with $|S| \leq c - 1$ and any vertex $w\in V(D')\setminus S$, there exists a path $P$ from $w$ to a vertex in $\sigma(n-2c - s+2, n)  \setminus S$ in $D' \setminus S$. Similarly, there exists a path $P'$ from a vertex in $\sigma(1, 2c + s - 1)\setminus S$ to $w$ in $D' \setminus S$.  Take a path $P$ starting at $w$ and ending at $v_j$ with the largest possible $j$ in the ordering  $\sigma$. Suppose   $j \leq  n - 2c - s +1 $, i.e., $ v_j\notin \sigma(n-2c - s+2, n) $. By (D3), $d^+_{D'}(v_j)\ge c$.
Since every arc of $D'$ is $\sigma$-forward, every out-neighbour
of $v_j$ has index larger than $j$. As $|S|\le c-1$, there exists
$v_{j'}\in N^+_{D'}(v_j)\setminus S$ with $j'>j$.
This contradicts the maximality of $j$. Thus we have $v_j\in \sigma(n-2c - s+2, n) $. Therefore there exists a path $P$ in $D'-S$ from $w$ to $v_j \in \sigma(n-2c - s+2, n) $. We can find $P'$ in a similar way, which completes the proof.
\end{proof}

\subsection{Proof of Theorem \ref{main1}}

In this subsection, suppose $D=(V_1,V_2; A)$ is a $k$-strong split digraph satisfying { $\delta^0(D)\geq 26k+15$}. We first give the outline of the proof.

\textbf{Sketch of Proof.}
 We first select two disjoint nearly dominating sets $X$ and $Y$ from a spanning tournament $T$ of $D[V_2]$, and then establish $k$ disjoint paths $\mathcal{P}$ from some {set of $k$} vertices in $X$ to some {set of $k$} vertices in $Y$ via Menger’s theorem. By reversing certain arcs of $A(T)$, we obtain a new tournament $T_{\mathrm{rev}}$ containing all {arcs of $\mathcal P$ whose two ends lie in $V_2$}. We use Lemma \ref{good graph} to obtain three sparse oriented subdigraphs $T'_{\text{sparse}}, T''_{\text{sparse}}$ and $T^{\text{abs}}_{\text{sparse}}$ of $T_{\mathrm{rev}}$. The constructed subdigraph $T^{\text{abs}}_{\text{sparse}}$ possesses two small absorbing sets $S_{\mathrm{in}}$ and $S_{\mathrm{out}}$ to control the connectivity properties of vertices in $V_2\setminus (X\cup Y)$. To obtain a highly strong spanning subdigraph, we iteratively construct two families of auxiliary fans and maintain the entire structure $2$-cycle-free, which is made possible by the large minimum semidegree of $D$. Finally, we greedily add arcs incident to vertices in $V_1\cup X\cup Y$, using the large minimum semidegree condition.  Integrating all constructed components yields the desired spanning $k$-strong oriented subdigraph, and the upper bound on its arc cardinality is consequently established.

\begin{figure}[htbp]
    \centering
\includegraphics[width=0.6\textwidth]{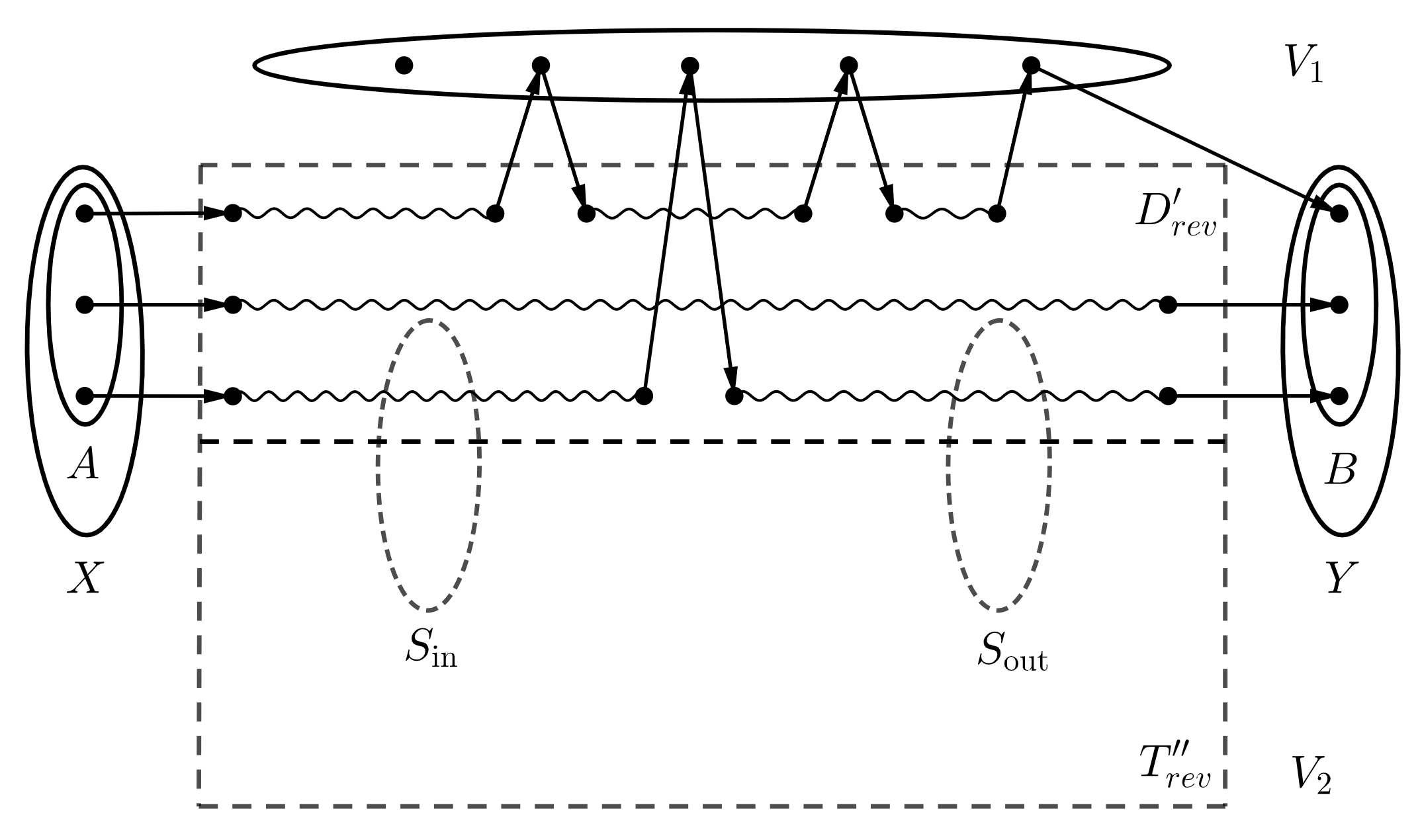}
    \caption{Structural framework of sparse spanning $k$-strong oriented subdigraphs of a split digraph $D$.}
    \label{fig1}
\end{figure}


\begin{proof}
Let \(T\) be an arbitrary spanning tournament of \(D[V_2]\).
 {As all neighbours of vertices in $V_1$ lie in $V_2$, we have }{ $|T|\geq 26k+15$}. Since every induced subdigraph of a tournament is still semicomplete,  {repeated applications of Lemma  \ref{key1} gives a nearly in-dominating set $X\subseteq V(T)$ of size $3k+3$, and  then a nearly out-dominating set $Y\subseteq V(T)\setminus X$ of size $3k+3$ in $T-X$.}  These sets can be found in polynomial time.
The cardinality $3k+3$ is selected to guarantee that \(T[X]\) contains a set \(A = \{x_1, \dots, x_k\} \subseteq X\) with \(d^-_{T[X]}(x_i) \geq k+1\) for each \(x_i \in A\).  Similarly, \(T[Y]\) contains a set \(B = \{y_1, \dots, y_k\} \subseteq Y\) with \(d^+_{T[Y]}(y_i) \geq k+1\) for each \(y_i \in B\).  To {show} this, suppose that fewer than $k$ vertices of $T[X]$ have in-degree at least $k+1$. Then some set $U\subseteq X$ of size $2k+4$ consists entirely of vertices of in-degree at most $k$ in $T[X]$. Hence $\binom{2k+4}{2}=|A(T[U])|\leq\sum_{u\in U}d^-_{T[X]}(u)\leq k(2k+4)$, a contradiction. The assertion for $T[Y]$ follows symmetrically.

 Since \(D\) is \(k\)-strong,  Menger's theorem {implies that,} after relabeling \(A\) and \(B\) if necessary, there exist \(k\) disjoint paths \(\mathcal{P} = \{P_1, \dots, P_k\}\) from \(A\) to \(B\) such that \(P_i\) is a minimal  \((x_i, y_i)\)-path in \(D\). Clearly,
\begin{equation}\label{e5}
    \text{for every } v \in V(D),  {\text{ the set }} \rev{A(\mathcal{P})} \text{ contains at most one out-arc and one in-arc of } v.
\end{equation}
 { This implies that $\delta^{0}(D\setminus \overleftarrow{A(\mathcal{P})})\geq (26k+15)-1=26k+14$.\unboldmath}

 {Let $A_2(\mathcal P):=A(\mathcal P)\cap A(D[V_2])$, and define}
\[
 {A(T_{\mathrm{rev}}):=\bigl(A(T)\setminus \rev{A(\mathcal{P})} \bigr)\cup A_2(\mathcal P).}
\]
 {Then $T_{\mathrm{rev}}$ is a spanning tournament of $D[V_2]$.}
By the definition of $\rev{A(\vP)}$, $T_{\text{rev}}$ is defined to preserve the property of being 2-cycle-free {and it contains all those arcs of $A(\vP)$ which have both endvertices in $V_2$}. Indeed, all subsequent fan constructions utilize arcs from the original tournament $T$ whenever possible. {Next we}  partition $V_2$ into three parts $V^1_2:=X\cup Y$, $V^2_2:=\text{Int}(\vP)\cap (V_2\setminus (X\cup Y))$ and $V^3_2:=V_2\setminus ({V_2^1}\cup V^2_2) $. Let $D'_{\mathrm{rev}}:=T_{\mathrm{rev}}[ V^2_2]\setminus A(\vP)$ and let $T''_{\mathrm{rev}}:= T_{\mathrm{rev}}[V^3_2]$.  One can easily check that $D'_{\mathrm{rev}}$ is an oriented graph with $\delta (D'_{\mathrm{rev}}) \geq {|V_2^2|}-3$ and $T''_{\mathrm{rev}}$ is a tournament.

For each {of the} oriented graphs $D'_{\mathrm{rev}}$ and  $T''_{\mathrm{rev}}$, we {construct} a spanning sparse subdigraph by using Lemma~\ref{good graph} with different parameters, respectively.  If either graph is empty, we take its sparse spanning subdigraph and its ordering to be empty.   Specifically, applying Lemma~\ref{good graph} to $D'_{\mathrm{rev}}$ with $c:=k-1$  and $s:=2$, {we obtain} a spanning oriented graph $T'_{\mathrm{sparse}}$ of $D'_{\mathrm{rev}}$ {and an ordering $\sigma '_{\mathrm{sparse}}$ of $V(D'_{\mathrm{rev}})$ such that $T'_{\mathrm{sparse}}$ is $(\sigma '_{\mathrm{sparse}},k-1,2)$-nice.}  The case $k=1$, where $c=0$, is also covered by Lemma~\ref{good graph}.
 Hence, (D4) gives
\begin{equation}\label{degree1}
 |A(T'_{\mathrm{sparse}})| \le (k-1)|T'_{\mathrm{sparse}}| +9k^2-12k+3.
 \end{equation}
{Let $S'_{\mathrm{out}}:=\sigma '_{\mathrm{sparse}}(1,2c + s-1) $ and let $S'_{\mathrm{in}}:=\sigma '_{\mathrm{sparse}}(|D'_{\mathrm{rev}}|-2c - s+2, |D'_{\mathrm{rev}}|) $. Then  $|S'_{\mathrm{in}}|,| S'_{\mathrm{out}}|\leq 2c+s-1=  {2k-1}$ and (D5) implies that for every $u \in V(D'_{\mathrm{rev}})$,}
\begin{equation}\label{e7}
\begin{aligned}
     \text{ both $(u, S'_{\mathrm{in}})$ and $(S'_{\mathrm{out}}, u)$}\text{  are $(k-1)$-connected in $T'_{\mathrm{sparse}}$}.
\end{aligned}
\end{equation}
 Applying Lemma~\ref{good graph} to $T''_{\mathrm{rev}}$ with $c:=k$ and $s:=0$, we obtain a spanning oriented graph $T''_{\mathrm{sparse}}$ of $ T''_{\mathrm{rev}}$ {and an ordering $\sigma ''_{\mathrm{sparse}}$ of $V(T''_{\mathrm{rev}})$ such that $T''_{\mathrm{sparse}}$ is $(\sigma ''_{\mathrm{sparse}},k,0)$-nice. Hence, (D4) gives}
 \begin{equation}\label{degree2}
|A(T''_{\mathrm{sparse}})| \le k|T''_{\mathrm{sparse}}| + 9k^2-4k.
 \end{equation}
{Let $S''_{\mathrm{out}}:=\sigma ''_{\mathrm{sparse}}(1,2c + s-1) $ and let $S''_{\mathrm{in}}:=\sigma ''_{\mathrm{sparse}}(|T''_{\mathrm{rev}}|-2c - s+2, |T''_{\mathrm{rev}}|) $. Then  $|S''_{\mathrm{in}}|,| S''_{\mathrm{out}}|\leq 2c+s-1=  {2k-1}$ and (D5) implies that} 
\begin{equation}\label{e9}
\begin{aligned}
     &\text{for every } u \in V(T''_{\mathrm{rev}}),   \text{  both $(u, S''_{\mathrm{in}})$ and $(S''_{\mathrm{out}}, u)$ are $k$-connected in $T''_{\mathrm{sparse}}$}.
\end{aligned}
\end{equation}
Consider the tournament $T^{\mathrm{abs}}:= T_{\mathrm{rev}}[ S'_{\mathrm{in}}\cup S''_{\mathrm{in}}\cup S'_{\mathrm{out}}\cup S''_{\mathrm{out}} ] $.  Since
{ 
\[
|V_2\setminus(X\cup Y)|\geq 26k+15-2(3k+3)=20k+9,
\]
\unboldmath}
{  at least one of $V_2^2$ and $V_2^3$ satisfies $|V_2^i|\geq \lceil(20k+9)/2\rceil>4k-2$.\unboldmath} Accordingly, either $|S'_{\mathrm{in}}|=2k-1$ and $|S'_{\mathrm{out}}|=2k-1$, or $|S''_{\mathrm{in}}|=2k-1$ and $|S''_{\mathrm{out}}|=2k-1$. Consequently,
\begin{equation}\label{Tabssize}
    4k-2\leq |T^{\mathrm{abs}}|\leq 8k-4.
\end{equation}
 Applying Lemma~\ref{good graph} to $T^{\mathrm{abs}}$ with $c:=k$ and $s:=0$, we obtain a spanning oriented graph $T^{\mathrm{abs}}_{\mathrm{sparse}}$ of $ T^{\mathrm{abs}}$, {and an ordering $\sigma ^{\mathrm{abs}}_{\mathrm{sparse}}$ of $V(T^{\mathrm{abs}})$ such that $T^{\mathrm{abs}}_{\mathrm{sparse}}$ is $(\sigma ^{\mathrm{abs}}_{\mathrm{sparse}},k,0)$-nice.} Hence, (D4) gives
 \begin{equation}\label{degree3}
      |A(T^{\mathrm{abs}}_{\mathrm{sparse}})| \le k|T^{\mathrm{abs}}_{\mathrm{sparse}}| +9k^2 - 4k\leq 17k^2-8k.
 \end{equation}
Let $S_{\mathrm{out}}:=\sigma ^{\mathrm{abs}}_{\mathrm{sparse}}(1,2k-1) $ and let $S_{\mathrm{in}}:=\sigma ^{\mathrm{abs}}_{\mathrm{sparse}}(|T^{\mathrm{abs}}|-2k+2, |T^{\mathrm{abs}}|) $. 
Then {(\ref{Tabssize}) implies that $S_{\mathrm{in}}\cap S_{\mathrm{out}}=\emptyset$, $|S_{\mathrm{in}}|,| S_{\mathrm{out}}|=  {2k-1}$ and  }(D5) implies that
\begin{equation}\label{e11}
\begin{aligned}
     &\text{for every } u \in V(T^{\mathrm{abs}}_{\mathrm{sparse}}),   \text{  both $(u, S_{\mathrm{in}})$ and $(S_{\mathrm{out}}, u)$ are $k$-connected in $T^{\mathrm{abs}}_{\mathrm{sparse}}$}.
\end{aligned}
\end{equation}
{Let }$D^0$ be the spanning subdigraph of $D$ with {arc set}
\[
 A(D^0)=A(\mathcal{P}) \cup A(T'_{\text{sparse}})\cup A(T''_{\text{sparse}})\cup A(T^{\text{abs}}_{\text{sparse}}).
\]
Since $ T'_{\text{sparse}}$, $ T''_{\text{sparse}} $, and $T^{\text{abs}}_{\text{sparse}}$ are subdigraphs of the tournament $ T_{\mathrm{rev}} $, it follows that $ D^0$ is {an oriented graph}.
For each $v\in S_{\mathrm{in}}\cup S_{\mathrm{out}}$, by (D2), there are at most $2k$ in-arcs and at most $2k$ out-arcs of $v$ in $T^{\text{abs}}_{\text{sparse}}$.  Since $V(T'_{\text{sparse}})\cap V(T''_{\text{sparse}})=\emptyset$, $v$ belongs to exactly one of $T'_{\text{sparse}}$ or $T''_{\text{sparse}}$, so  (D2) again yields that there are at most $2k$ in-arcs and at most $2k$ out-arcs of  $v$. The set $A(\mathcal{P})$ contains at most one out‑arc and one in‑arc of $v$ by \eqref{e5}.    Hence,
\begin{equation}\label{D0}
\begin{aligned}
     &\text{$D^0$ contains at most $ 4k+1$ in-arcs and at most $4k+1$ out-arcs of $ v\in S_{\mathrm{in}}\cup S_{\mathrm{out}}$,}\\
        &\text{ \ \ \  ${D^0}$ contains at most one in-arc and out-arc of $ v$ for each $ v\in V_1\cup X\cup Y$}.
     \end{aligned}
 \end{equation}

 To guarantee the $k$-strong connectivity of the final spanning oriented graph, we construct suitable fans within $D\setminus \overleftarrow{A(D^0)}$ for every vertex $v\in S_{\mathrm{in}}\cup S_{\mathrm{out}}$. To be precise, for each vertex $v\in S_{\mathrm{in}}$, we build  {  an oriented} $(v,X,3k+3,4)$-fan $\mathcal{Q}_{v,X}$ and set $\mathcal{Q}_{Y,v}=\emptyset$; For each vertex $v\in S_{\mathrm{out}}$, we build  {  an oriented} $(Y,v,3k+3,4)$-fan $\mathcal{Q}_{Y,v}$ and set $\mathcal{Q}_{v,X}=\emptyset$. {  For each vertex $u_l$, exactly one of $\mathcal{Q}_{u_l,X}$ and $\mathcal{Q}_{Y,u_l}$ is nonempty as $S_{\mathrm{in}}\cap S_{\mathrm{out}}=\emptyset$. Our construction proceeds iteratively: at each step $l$, we build either the oriented fan $\vQ_{u_l,X}$ or $\vQ_{Y,u_l}$, such that all arcs of the newly constructed fan avoid every reversed arc in $\bigcup_{j\in[l-1]}\overleftarrow{A(\mathcal{Q}_{u_j,X}\cup\mathcal{Q}_{Y,u_j})}$. This iterative procedure then yields that the whole arc set $\bigcup_{j\in[m]}A(\mathcal{Q}_{u_j,X}\cup\mathcal{Q}_{Y,u_j})$ is free of $2$-cycles.}
 Set $m:=|S_{\mathrm{in}}\cup S_{\mathrm{out}}|\leq 4k-2$ and write $S_{\mathrm{in}}\cup S_{\mathrm{out}}=\{u_1,u_2,\dots,u_{m}\}$. We proceed to construct these fans sequentially in the given order. Assume that we have already established the fans $\mathcal{Q}_{u_1,X},\mathcal{Q}_{Y,u_1},\dots,\mathcal{Q}_{u_{l-1},X},\mathcal{Q}_{Y,u_{l-1}}$ for vertices $u_1,\dots$, $u_{l-1}$ $\in S_{\mathrm{in}}\cup S_{\mathrm{out}}$ so that these satisfy the following:

 \begin{itemize}
    \item[(I$_{l-1}$)] The vertex $u_l$ has at least {$18k+14$} out-neighbors and in-neighbors in $D_{l-1}$, where
    \[
    D_{l-1}:=D\setminus \bigg(\overleftarrow{A(D^0)}\cup \bigcup_{j\in [l-1]}\overleftarrow{A(\mathcal{Q}_{u_j,X}\cup \mathcal{Q}_{Y,u_j})}\bigg);
    \]
    \item[(II$_{l-1}$)] The arc set $\bigcup_{j\in [l-1]}\overleftarrow{A(\mathcal{Q}_{u_j,X}\cup \mathcal{Q}_{Y,u_j})}$ uses at most $|S_{\mathrm{in}}|+| S_{\mathrm{out}}|-1\leq 4k-3$ in-arcs and $|S_{\mathrm{in}}|+| S_{\mathrm{out}}|-1\leq 4k-3$ out-arcs for each vertex in $ V(D)\setminus \{u_1,\ldots,u_{l-1}\}$;
    \item[(III$_{l-1}$)] For all $i<l$, $\mathcal{Q}_{u_i,X}$ forms a $(u_i,X,3k+3,4)$-fan in $D_{i-1}$ whenever $u_i\in S_{\mathrm{in}}$, and $\mathcal{Q}_{Y,u_i}$ forms a $(Y,u_i,3k+3,4)$-fan in $D_{i-1}$ whenever $u_i\in S_{\mathrm{out}}$.
     \item[(IV$_{l-1}$)] For all $i<l$, the union $\mathcal{Q}_{u_i,X}\cup\mathcal{Q}_{Y,u_i}$ contains no $2$-cycles.
     \item[(V$_{l-1}$)] For all $i<l$, among arcs in $A(\mathcal{Q}_{u_i,X})$, only those  from $u_i$ to $V_2\setminus Y$ {may be} reverse arcs of arcs of $A(T)$; the rest arcs are either between $V_1$ and $V_2$ or arcs in $T\setminus A(\mathcal{P})$.

     \item[(VI$_{l-1}$)] For all $i<l$, among arcs in $A(\mathcal{Q}_{Y,u_i})$, only those from $V_2\setminus X$ to $u_i$ {may be} reverse arcs of $A(T)$; the rest are either cross arcs between $V_1$ and $V_2$ or arcs in $T\setminus A(\mathcal{P})$.
\end{itemize}
For $l=1$, (II$_0$)-(VI$_0$) are vacuous, while (I$_0$) follows from {$\delta^0(D)\ge26k+15$} and \eqref{D0}.


Without loss of generality, we may assume that \(u_l\in S_{\mathrm{in}}\). {  The case \(u_l\in S_{\mathrm{out}}\) follows by symmetric  arguments. For completeness, we nevertheless provide the construction of \(\vQ_{Y,u_l}\) if \(u_l\in S_{\mathrm{out}}\).  \unboldmath} We {now describe how to} construct a $(u_l,X,3k+3,4)$-fan $\mathcal{Q}_{u_l,X}=\{Q_{u_l,x_1},Q_{u_l,x_2},\dots$, $Q_{u_l,x_{3k+3}}\}$ in  the digraph $D_{l-1}$. For each vertex $x_i\in X$, the directed path $Q_{u_l,x_i}$ is specified as below. If the arc $u_lx_i\in A(D_{l-1})$, let $Q_{u_l,x_i}=u_lx_i$ be a trivial path of length 1. Otherwise, we take $Q_{u_l,x_i}$ to be a directed path whose length is bounded by 4. Denote by $X':=\{x_1,x_2,\dots,x_t\}$ the set of those vertices in $X$ satisfying $u_lx_i\in A(D_{l-1})$, and let $X''=X\setminus X' $. We start by  finding a $(u_l,V_2\setminus (X\cup Y\cup S_{\mathrm{out}}),{ {13k+12-t}},2)$-fan in $D_{l-1}$. By (I$_{l-1}$) and the definition of $X'$, the number of out-neighbors of $u_l$ in $D_{l-1}\setminus (X\cup Y\cup S_{\mathrm{out}})$ is at least
{\[
\begin{aligned}
   18k+14-|X'|-|Y\cup S_{\mathrm{out}}|&\geq 18k+14-t-(3k+3)-(2k-1)={  13k+12-t}.
\end{aligned}
\]}
Choose ${  13k+12-t}$ such out-neighbors and denote them by $U^+$. Use each vertex of $U^+\cap V_2$ directly as a terminal. For each vertex $w\in U^+\cap V_1$, by \eqref{D0} and (II$_{l-1}$), the number of out-neighbours of $w$ in $D_{l-1}\setminus (X\cup Y\cup S_{\mathrm{out}})$ is at least
{ {\begin{equation*}
    \begin{aligned}
        & \ \delta^0(D)-d^-_{D_0}(w)-(4k-3) -|X\cup Y\cup S_{\mathrm{out}}|\\
        &\geq 26k+15-1-(4k-3)-2(3k+3)-(2k-1)=14k+12>{  13k+12-t}.
    \end{aligned}
\end{equation*}}}
{Thus for}  $U^+\cap V_1$ {we can choose} distinct terminals greedily in $V_2\setminus(X\cup Y\cup S_{\mathrm{out}})$. {This completes the construction of the desired $(u_l,V_2\setminus (X\cup Y\cup S_{\mathrm{out}}),{  13k+12-t},2)$-fan  which we denote by $F^+_{u_l}$ in $D_{l-1}$}. Let $Z^+_l$ denote the ${  13k+12-t} $ terminal vertices of
 $F^+_{u_l}$. 
Recall that $X$ is nearly in-dominating and observe that
\[
\begin{aligned}
3|X''|+5+2|S_{\mathrm{in}}\cup X'|
&\leq 3(3k+3-t)+5+2((2k-1)+t)= {  13k+12-t}=|Z_l^+|.
\end{aligned}
\]
{Thus we can apply Lemma~\ref{nearly} with $T:=T$, $R:=\mathcal{P}$, $W:=S_{\mathrm{in}}\cup X'$ and $l_1:=1$, to conclude that}
there exist a set $\mathcal{Q}'_l$  of $3k+3-t $ disjoint paths from $Z^+_l$ to $X''$ of each length at most 2 in $T\setminus \rev{A(\mathcal{P})}$, which are internally disjoint from {$S_{\mathrm{in}}\cup X'$}. {Combining $F^+_{u_{l}}$, $\mathcal{Q}'_l$ and the direct arcs from $u_{l}$ to $X'$, we obtain a} $( u_l,X,3k+3,4)$-fan  $\vQ_{u_l,X}$ (see Fig. \ref{fan:fig1}).


\begin{figure}[htbp]
    \centering
\includegraphics[width=0.6\textwidth]{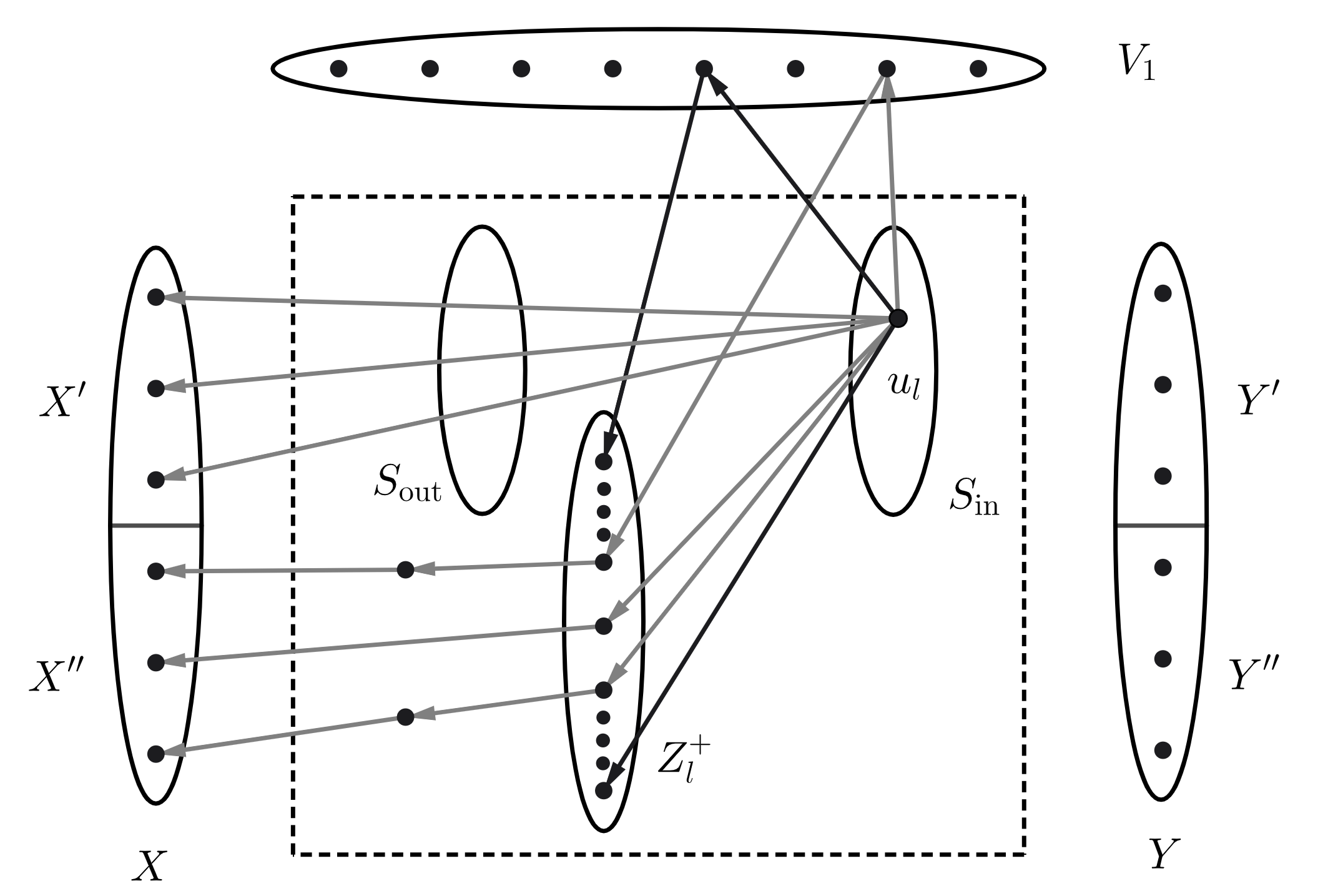}
    \caption{The construction of $\mathcal{Q}_{u_l,X}$: Black paths are redundant preliminary internally vertex-disjoint paths, and grey paths form the final selected fan $\mathcal{Q}_{u_l,X}$.}
    \label{fan:fig1}
\end{figure}


 {We have to show that} no arc from $\bigcup_{j\in [l-1]}\overleftarrow{A(\mathcal{Q}_{u_j,X})}$  $\cup \overleftarrow{A(\mathcal{Q}_{Y,u_j})}$ is used by $\mathcal{Q}'_l$. Assume for contradiction that there exists an arc $ e\in \overleftarrow{A(\mathcal{Q}_{u_j,X})}$  $\cup \overleftarrow{A(\mathcal{Q}_{Y,u_j})}$, for some $u_j\in S_{\mathrm{in}}\cup S_{\mathrm{out}}$,
belonging to $A(\mathcal{Q}'_l)(\subseteq A(T)\setminus \rev{A(\mathcal{P})})$. Hence $ \rev{e}\in (A(\mathcal{Q}_{u_j,X}\cup \mathcal{Q}_{Y,u_j}))\setminus A(T)$. {By} (V$_{l-1}$)-(VI$_{l-1}$), the arc $e$ is either from some vertex in $V_2\setminus (Y\cup X'')$ to $u_j\in S_{\mathrm{in}}$ or from $ u_j\in S_{\mathrm{out}}$ to some vertex in $V_2\setminus X$. Recall that $\mathcal{Q}'_l$ consists of disjoint paths  from $Z^+_l$ to $X''$ of each length at most 2. Accordingly, $u_j\in S_{\mathrm{in}}$ is an internal vertex of some path $Q\in \mathcal{Q}'_l$ in the former case. In the latter case, since the head of $e$ does not lie in $X$ and every path in $\mathcal{Q}'_l$ has length at most 2 and terminates in $X''\subseteq X$, the arc $e$ must be the first arc of its path; hence $u_j\in S_{\mathrm{out}}$ is its initial vertex. Neither possibility can occur, since all paths in $\mathcal{Q}'_l$ are internally disjoint from $S_{\mathrm{in}}$, and
\[
\mathrm{Init}(\mathcal{Q}'_l)\subseteq Z^+_l\subseteq V_2\setminus (X\cup Y\cup S_{\mathrm{out}}).
\]
Consequently, $\vQ_{u_l,X}$  {is the desired fan} in $ D_{l-1} $ satisfying (V$_{l}$). 



The construction of $\mathcal{Q}_{Y,u_l}=\{Q_{y_1,u_l},\dots,Q_{y_{3k+3},u_l}\}$ is symmetric if \(u_l\in S_{\mathrm{out}}\). Specifically, set $Q_{y_i,u_l}=y_iu_l$ whenever $y_iu_l\in A(D_{l-1})$. {Let $Y':=\{y_1,y_2,\dots,y_t\}$ be the set of those vertices in $Y$ which satisfy} $y_iu_l\in  A(D_{l-1})$, and let  $Y''=Y\setminus Y' $. We {first} find a $(V_2\setminus (X\cup Y\cup S_{\mathrm{in}}),u_l,{  13k+12-t},2)$-fan in $D_{l-1}$. By (I$_{l-1}$), the number of in-neighbours of $u_l$ in
$D_{l-1}$ which lie outside $X\cup Y\cup S_{\mathrm{in}}$
 is  at least
{ \[
\begin{aligned}
18k+14-|Y'|-|X\cup S_{\mathrm{in}}|
\geq 18k+14-t-(3k+3)-(2k-1)= 13k+12-t.
\end{aligned}
\]}
Choose ${  13k+12-t}$ such in-neighbors and denote them by $U^-$.
For each vertex $w\in U^-\cap V_1$, by \eqref{D0} and  (II$_{l-1}$), the number of in-neighbours of $w$ in $D_{l-1}$ {which lie} outside $X\cup Y\cup S_{\mathrm{in}}$ is at least
{ \begin{equation*}
    \begin{aligned}
        &\delta^0(D)-d^+_{D_0}(w)-(4k-3) -|X\cup Y\cup S_{\mathrm{in}}|\\
        &\geq 26k+15-1-(4k-3)-2(3k+3)-(2k-1)=14k+12> {  13k+12-t}.
    \end{aligned}
\end{equation*}}
Hence, the desired $(V_2\setminus (X\cup Y\cup S_{\mathrm{in}}),u_l,{  13k+12-t},2)$-fan {which we denote by $F^-_{u_l}$} can be found in $D_{l-1}$. Let $Z^-_l$ be {the} ${  13k+12-t} $ initial vertices of this fan. Applying Lemma~\ref{nearly} with $T:=T-X$, $R:=\mathcal{P}$, $W:=S_{\mathrm{out}}\cup Y'$ and $l_1:=1$, there exist $3k+3-t $ disjoint paths $\mathcal{Q}''_l$ from $Y''$ to $ Z^-_l $ of each length at most 2 in $(T-X)\setminus\rev{A(\mathcal P)}$ ,
which are internally disjoint with $S_{\mathrm{out}}\cup Y'$ (as $Y$ is nearly out-dominating in $T-X$ by the construction of $Y$). As above, this is valid because $Z_l^-\subseteq V(T)\setminus Y$, and the required cardinality inequality follows from $|Y''|=3k+3-t$. Similarly, no arc from $\bigcup_{j\in [l-1]}\overleftarrow{A(\mathcal{Q}_{u_j,X}\cup \mathcal{Q}_{Y,u_j})}$ is used by $\mathcal{Q}''_l$, since all paths in $\mathcal{Q}''_l$ are internally disjoint from $S_{\mathrm{out}}$, and $\mathrm{Ter}(\mathcal{Q}''_l)\subseteq Z^-_l\subseteq V_2\setminus (X\cup Y\cup S_{\mathrm{in}}).$ Together with the $(Z^-_l,u_l,{  13k+12-t},2)$-fan, a $(Y, u_l,3k+3,4)$-fan $\vQ_{Y,u_l}$ exists in $D_{l-1}$ (see Fig. \ref{fan:fig2}).

\begin{figure}[htbp]
    \centering
\includegraphics[width=0.6\textwidth]{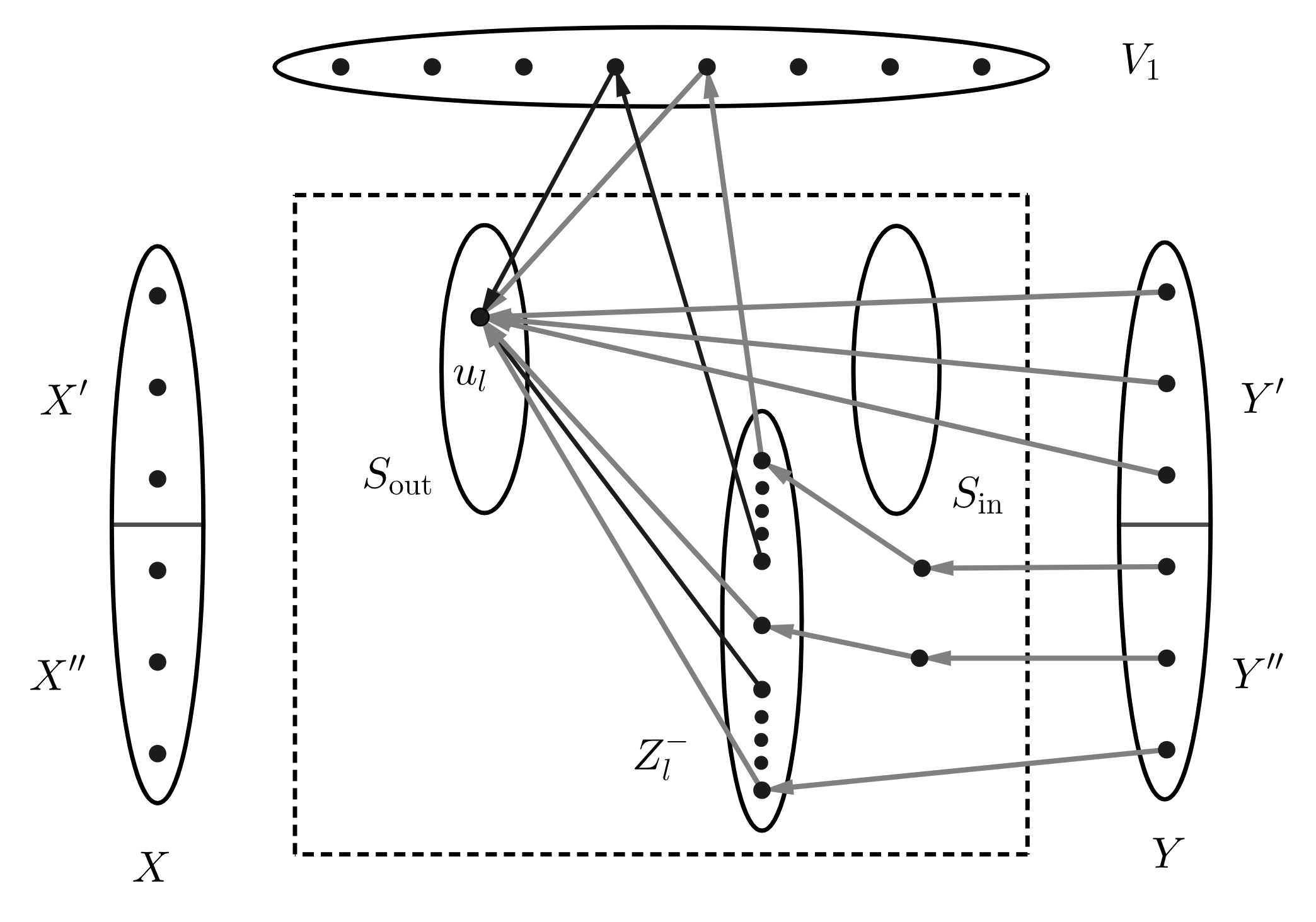}
    \caption{The construction of $\mathcal{Q}_{Y,u_l}$: Black paths are redundant preliminary internally vertex-disjoint paths, and grey paths form the final selected fan $\mathcal{Q}_{Y,u_l}$.}
    \label{fan:fig2}
\end{figure}


We now verify that $\mathcal{Q}_{u_l,X}$ and $\mathcal{Q}_{Y,u_l}$ satisfy conditions $(\text{I}_l)$–$(\text{VI}_l)$. Conditions {(III$_{l}$)}-(VI$_{l}$) hold trivially by construction. Condition $(\text{II}_l)$ follows from the fact that each individual fan $\mathcal{Q}_{u_l,X}$ uses at most one out-arc and one in-arc incident to any vertex $v\in V(D)\setminus \{u_1,\dots,u_l\}$, because its paths are internally disjoint. Since at least one fan corresponding to an unprocessed center is still absent, the number of already constructed individual fans is at most $|S_{\mathrm{in}}|+|S_{\mathrm{out}}|-1\le4k-3$. Condition $(\text{I}_l)$ can be derived from $(\text{II}_l)$, \eqref{D0} and the inequality
{ \[
\delta^0(D)\stackrel{(\text{II$_l$})}{-}(4k-3)\stackrel{\eqref{D0}}{-}(4k+1)\geq 26k+15-8k+2=18k+17.
\]}
The arguments above imply that we can  continue the construction process  until all $m$ centers have been processed.

{Let} $\mathcal{K}:=\bigcup_{v\in S_{\mathrm{in}}}\vQ_{v,X}\cup \bigcup_{v\in S_{\mathrm{out}}}\vQ_{Y,v}$.
\begin{claim}\label{claimk}
    The subdigraph $ \mathcal{K}$ satisfies the following.
    \begin{itemize}
        \item[$(K1)$] For every $v\in S_{\mathrm{out}}$, there is a $(Y,v,3k+3,4)$-fan using only arcs in $A(\mathcal{K})$. Also, for every $v\in S_{\mathrm{in}}$, there is a $(v,X,3k+3,4)$-fan using only arcs in $A(\mathcal{K})$.
    \item[$(K2)$] The set $A(D^0)\cup A(\mathcal{K})$ contains no $2$-cycle, and $|A(\mathcal{K})|\leq 48k^2+24k-24$.
    \item[$(K3)$] For each $v\in V_1\cup X\cup Y$, less than $4k$ in-arcs and $4k$ out-arcs of $v$ are in $A(\mathcal{K})\cup A(\mathcal{P} )$.
    \end{itemize}
\end{claim}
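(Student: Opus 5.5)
The plan is to read off the three properties from the invariants (I$_l$)--(VI$_l$), which we have verified at every step $l\le m$, together with the way $D^0$ was built.

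\emph{(K1).} By (III$_m$), for every $u_i\in S_{\mathrm{in}}$ the family $\mathcal{Q}_{u_i,X}$ is a $(u_i,X,3k+3,4)$-fan, and for every $u_i\in S_{\mathrm{out}}$ the family $\mathcal{Q}_{Y,u_i}$ is a $(Y,u_i,3k+3,4)$-fan. All its arcs belong to $\mathcal{K}$ by definition, so (K1) is immediate.

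\emph{(K2), no $2$-cycles.} First, $D^0$ itself is oriented; this was shown earlier. Next, a single fan contains no $2$-cycle by (IV$_m$). Suppose a $2$-cycle uses an arc of $\mathcal{Q}_{u_i,\cdot}$ and an arc of $\mathcal{Q}_{u_j,\cdot}$ with $j<i$. The later fan lies in $D_{i-1}$ by (III$_m$), and $D_{i-1}$ excludes $\overleftarrow{A(\mathcal{Q}_{u_j,X}\cup\mathcal{Q}_{Y,u_j})}$, so this is impossible. Similarly, every fan lies in $D_{i-1}$, which excludes $\overleftarrow{A(D^0)}$, so no $2$-cycle meets both $D^0$ and $\mathcal{K}$.

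The one point needing care is that the pieces $\mathcal{Q}'_l$ and $\mathcal{Q}''_l$ were produced by Lemma~\ref{nearly} inside $T\setminus\rev{A(\mathcal{P})}$, not a priori inside $D_{l-1}$. For this I would spell out two checks.
\begin{itemize}
\item The avoidance of earlier reversed fan arcs was argued in the construction using (V$_{l-1}$)--(VI$_{l-1}$).
\item For the reverses of $A(D^0)$: apart from $A(\mathcal{P})$, every arc of $D^0$ lies in $T_{\mathrm{rev}}$. Now $T_{\mathrm{rev}}$ differs from $T$ only by arcs of $A_2(\mathcal{P})$. Hence if $e\in A(T)\setminus\rev{A(\mathcal{P})}$ had its reverse in $T_{\mathrm{rev}}$, that reverse would lie in $A_2(\mathcal{P})$, so $e\in\rev{A(\mathcal{P})}$, which is a contradiction. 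The reverse of $e$ also cannot lie in $A(\mathcal{P})$, for the same reason.
\end{itemize}

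\emph{(K2), arc count.} There are $m\le 4k-2$ fans, each consisting of $3k+3$ paths of length at most $4$. Hence
\[
|A(\mathcal{K})|\le 4(3k+3)(4k-2)=48k^2+24k-24 .
\]

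\emph{(K3).} Fix $v\in V_1\cup X\cup Y$. The centres $u_i$ lie in $S_{\mathrm{in}}\cup S_{\mathrm{out}}\subseteq V_2^2\cup V_2^3$, so $v$ is never a centre. The paths of a fan are internally disjoint, so each of the $m\le 4k-2$ fans contributes at most one in-arc and one out-arc at $v$. This holds whether $v$ is an internal vertex of the fan or a terminal in $X$ or an initial vertex in $Y$. By \eqref{e5}, $A(\mathcal{P})$ adds at most one more in-arc and one more out-arc at $v$. So $v$ has at most $4k-1<4k$ in-arcs and at most $4k-1<4k$ out-arcs in $A(\mathcal{K})\cup A(\mathcal{P})$.

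I expect the only genuinely delicate step to be the $2$-cycle-freeness of the Lemma~\ref{nearly} pieces against $D^0$, handled by the argument about $T$ versus $T_{\mathrm{rev}}$ above. The remaining steps are bookkeeping.
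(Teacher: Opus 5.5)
Your proposal is correct and follows the paper's proof. (K1) is read off from the fans. (K2) comes from building each fan in $D_{l-1}$ together with (IV) and the count $4(3k+3)(4k-2)$. (K3) comes from internal disjointness of each fan plus \eqref{e5}. Your extra check that the pieces produced by Lemma~\ref{nearly} avoid $\rev{A(D^0)}$ (since $A(T)\setminus\rev{A(\mathcal{P})}\subseteq A(T_{\mathrm{rev}})$ and $T_{\mathrm{rev}}$ is a tournament) is valid. The paper leaves that step implicit when it asserts that each fan lies in $D_{l-1}$.
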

\begin{proof}
Property (K1) is immediate from the definition of $\mathcal{K}$. For (K2), at each step the new fan is constructed after deleting the reverse arcs of $D^0$ and of all previously constructed fans, while (IV$_l$) guarantees that the two fans with common center $u_l$ contain no 2-cycle. Hence $A(D^0)\cup A(\mathcal{K})$ contains no 2-cycle. Each fan contains $3k+3$ paths of length at most $4$, and there are at most $|S_{\mathrm{in}}|+|S_{\mathrm{out}}|\le 4k-2$ individual fans. Thus we have,
\[
|A(\mathcal K)|\leq 4(3k+3)(4k-2)=48k^2+24k-24.
\] Finally, every vertex of $V_1\cup X\cup Y$ lies outside $S_{\mathrm{in}}\cup S_{\mathrm{out}}$. Each individual fan uses at most one in-arc and one out-arc at such a vertex, so $K$ contributes at most $|S_{\mathrm{in}}|+|S_{\mathrm{out}}|\le 4k-2$ arcs in either direction. By \eqref{e5}, $\mathcal P$ contributes at most one more in-arc and one more out-arc.  Hence the number in either direction is at most $(4k-2)+1< 4k$.  Thus (K3) follows.
\end{proof}

Let $D^1$ be the spanning subdigraph of $D$ with $A(D^1)=A(D^0)\cup A(\mathcal{K})$, and let $ D^2:= D\setminus \rev{A(D^1)}$. According to \eqref{D0} and ($K3$), we obtain that,  for each vertex $v\in V_1\cup X\cup Y$,
{  $$d^{+}_{D^2}(v), d^{-}_{D^2}(v)\geq \delta ^0(D)\stackrel{\eqref{D0}}{-}1\stackrel{(K3)}{-}(4k-1) \geq 26k+15-4k =22k+15.$$}
After excluding the at most $|X\cup Y|=6k+6$ vertices of $X\cup Y$, there remain at least
{ \[
22k+15-(6k+6)=16k+9\]}
in-neighbours and out-neighbours outside $X\cup Y$. Since $A(D^1)\subseteq A(D^2)$, we must further exclude the arcs already contained in $D^1$. By (K3), there are at most $4k-1$ such arcs in either direction at $v$. Thus at least $16k+9-(4k-1)=12k+10$ arcs remain available in either direction in $A(D^2)\setminus A(D^1)$. 
Thus, for each $v\in V_1$, we choose
$\max\{k-d^+_{D^1}(v,V_2\setminus(X\cup Y)),0\}$ out-arcs $\mathcal O_v$ from $v$ to $V_2\setminus(X\cup Y)$
and
$\max\{k-d^-_{D^1}(v,V_2\setminus(X\cup Y)),0\}$ in-arcs $\mathcal I_v$ from $V_2\setminus(X\cup Y)$ to $v$, all from
$A(D^2)\setminus A(D^1)$. For each $v\in X\cup Y$, we choose $k$ out-arcs $\mathcal{O}_v$
from $v$ to $V(D)\setminus (X\cup Y)$ and $k$ in-arcs
$\mathcal{I}_v$ from $V(D)\setminus (X\cup Y)$ to $v$, again from
$A(D^2)\setminus A(D^1)$.
In each case, we choose these arcs so that
$\mathcal{O}_v\cup\mathcal{I}_v$ contains no $2$-cycle.
 These choices can be made greedily: first choose $\mathcal{O}_v$, and then choose $\mathcal{I}_v$ while avoiding the reverse arcs of $\mathcal{O}_v$. Since $|\mathcal O_v|\leq k$, at least
{ \[
12k+10-k=11k+10\geq k
\]}
candidates remain for $\mathcal I_v$. Thus $\mathcal O_v\cup\mathcal I_v$ contains no $2$-cycle.

We now construct the desired spanning oriented subdigraph $D^*$ as follows:
let $D^*$ be the spanning subdigraph of $D$ with arc set
\[
A(D^*):=A(D^1)\cup {[\bigcup_{v\in V_1\cup X\cup Y}(\mathcal{O}_v\cup \mathcal{I}_v)]}\cup A(T_{\mathrm{rev}}[X])\cup A(T_{\mathrm{rev}}[Y]).
\]
By (K2), the choice of $\mathcal O_v$ and $\mathcal I_v$, the definition
of $D^2$, and the fact that $T_{\rm rev}$ agrees with the arcs of
$\mathcal P$ inside $V_2$, the digraph $D^*$ is {an oriented graph}.  The construction is polynomial‑time. The sets $X,Y$ are obtained via the polynomial‑time test described above; the linkage $\mathcal P$ is computed by a standard vertex‑splitting max‑flow procedure and shortened greedily; Lemma~\ref{good graph} invokes polynomial‑time ordering and bipartite‑matching algorithms; Lemma~\ref{nearly} and the fan construction are constructive; and the remaining unused arcs are chosen greedily. Consequently, $D^*$ is constructible in polynomial time.

\begin{claim}

    $|A(D^*)|\le kn+k|V_1|+98k^2+38k+3.$
\end{claim}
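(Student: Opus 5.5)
The plan is to bound $|A(D^*)|$ by splitting it into its constituent arc sets and charging each to the vertex class it mainly serves. By definition,
\[
A(D^*)=A(\mathcal P)\cup A(T'_{\mathrm{sparse}})\cup A(T''_{\mathrm{sparse}})\cup A(T^{\mathrm{abs}}_{\mathrm{sparse}})\cup A(\mathcal K)\cup\bigcup_{v\in V_1\cup X\cup Y}(\mathcal O_v\cup\mathcal I_v)\cup A(T_{\mathrm{rev}}[X])\cup A(T_{\mathrm{rev}}[Y]).
\]
I will bound $|A(D^*)|$ by the sum of the sizes of these sets. The goal is to group the terms so that everything linear in $n$ collapses to $k|V_2^2|+k|V_2^3|+2k|V_1|$. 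Since $V_2=V_2^1\cup V_2^2\cup V_2^3$ with $|V_2^1|=6k+6$, this equals $kn+k|V_1|$ up to an $O(k^2)$ error, and the error is absorbed into the constant.

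First, the $V_2$ part. By \eqref{degree2}, $|A(T''_{\mathrm{sparse}})|\le k|V_2^3|+9k^2-4k$. By \eqref{degree1}, $|A(T'_{\mathrm{sparse}})|\le (k-1)|V_2^2|+9k^2-12k+3$. The missing $|V_2^2|$ is paid by $\mathcal P$. The paths of $\mathcal P$ are disjoint, so $|A(\mathcal P)|=|V(\mathcal P)|-k$. Every vertex of $V(\mathcal P)$ lies in $V_1$, in $V_2^2$, or in $X\cup Y$. Hence
\[
|A(\mathcal P)|\le |V(\mathcal P)\cap V_1|+|V_2^2|+|X\cup Y|-k .
\]
Next, $|A(T^{\mathrm{abs}}_{\mathrm{sparse}})|\le 17k^2-8k$ by \eqref{degree3}, and $|A(\mathcal K)|\le 48k^2+24k-24$ by (K2). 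For the parts inside $X$ and $Y$, $|A(T_{\mathrm{rev}}[X])|+|A(T_{\mathrm{rev}}[Y])|=2\binom{3k+3}{2}$. For $v\in X\cup Y$ we have $|\mathcal O_v\cup\mathcal I_v|=2k$, which contributes $2k(6k+6)$ in total. All of these are $O(k^2)$ constants.

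Second, the $V_1$ part, which I expect to be the main obstacle because double counting must be avoided. Fix $v\in V_1$. The set $\mathcal O_v$ is chosen so that $v$ has exactly $\max\{k,d^+_{D^1}(v,V_2\setminus(X\cup Y))\}$ out-arcs into $V_2\setminus(X\cup Y)$ in $D^*$. So $|\mathcal O_v|\le k$, with equality only when $v$ has no such out-arc in $D^1$. I will charge $|\mathcal O_v|+|\mathcal I_v|\le 2k$ to $v$, giving $2k|V_1|$ in total. The arcs of $\mathcal P$ at $V_1$ are then over-counted, since each $v\in V(\mathcal P)\cap V_1$ already has its $\mathcal P$-arcs in $D^1$. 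The refinement I would try is to use $|\mathcal O_v|\le k-1$ whenever $v$'s $\mathcal P$-out-arc goes to $V_2\setminus(X\cup Y)$, and similarly for $\mathcal I_v$. This cancels the $|V(\mathcal P)\cap V_1|$ term in the bound for $|A(\mathcal P)|$, except for the at most $2k$ arcs of $\mathcal P$ joining $V_1$ to $X\cup Y$ (the endpoints). The arcs of $\mathcal K$ incident to $V_1$ are already inside the $(K2)$ bound, so no further charge is needed. One should also check that an arc of some $\mathcal O_v$ or $\mathcal I_v$ with $v\in X\cup Y$ that ends in $V_1$ is not counted again by the $V_1$ charge. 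Either it is discounted in the same way, or the overlap is simply absorbed into the constants.

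Finally, I substitute $|V_2^2|+|V_2^3|=|V_2|-(6k+6)=n-|V_1|-6k-6$ and collect terms. The linear parts give $k(n-|V_1|)+2k|V_1|=kn+k|V_1|$, and the $-k(6k+6)$ saved here offsets part of the $X\cup Y$ overhead. The remaining quadratic and linear constants come from $9k^2$ (twice), $17k^2$, $48k^2$, the $X\cup Y$ terms, and the $2k$ endpoint term of $\mathcal P$. These must be summed and checked to be at most $98k^2+38k+3$. If a naive sum overshoots, the place to tighten is the $X\cup Y$ accounting. Arcs of $\mathcal O_v$ and $\mathcal I_v$ for $v\in X\cup Y$ landing in $V_2^2\cup V_2^3$ should be offset against the $-k(6k+6)$ term. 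Arcs of $\mathcal P$ or $\mathcal K$ incident to $X\cup Y$ should not be counted both in their own total and in $|\mathcal O_v\cup\mathcal I_v|$.
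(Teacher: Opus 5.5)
Your decomposition and charging scheme are the paper's. $T'_{\mathrm{sparse}}$ is short one arc per vertex of $V_2^2$, and $\mathcal P$ pays for this via $|A(\mathcal P)|\le r+|V_2^2|+|X\cup Y|-k$ with $r=|V(\mathcal P)\cap V_1|$. The bound $2k|V_1|$ on the sets $\mathcal O_v,\mathcal I_v$ ($v\in V_1$) drops by one for every arc of $\mathcal P$ joining $V_1$ to $V_2\setminus(X\cup Y)$.

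The step that fails is your assertion that at most $2k$ arcs of $\mathcal P$ join $V_1$ to $X\cup Y$, on the grounds that only path endpoints lie there. Nothing forces the Menger paths to avoid $(X\cup Y)\setminus(A\cup B)$ internally. For instance, $P_i$ may contain $x_iwx'$ with $w\in V_1$ and $x'\in X\setminus A$ whenever $x'x_i\in A(D)$ but $x_ix'\notin A(D)$, and this does not violate minimality. Your own term $|X\cup Y|$ in the bound on $|A(\mathcal P)|$ already concedes that $\mathcal P$ may use all of $X\cup Y$. The correct count uses the fact that each vertex of $X\cup Y$ is incident with at most one in-arc and one out-arc of $\mathcal P$. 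So at most $2|X\cup Y|=12k+12$ of the $2r$ arcs of $\mathcal P$ at $V(\mathcal P)\cap V_1$ end in $X\cup Y$, and hence $\bigl|\bigcup_{v\in V_1}(\mathcal O_v\cup\mathcal I_v)\bigr|\le 2k|V_1|-2r+12k+12$.

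With this correction no further tightening is needed, but there is also no slack left in the constant. So the summation you left open is the actual content of the claim. Using $|V_2^2|+|V_2^3|=n-|V_1|-6k-6$, adding your eight bounds gives
\[
2k|V_1|+k\bigl(|V_2^2|+|V_2^3|\bigr)-r+104k^2+44k+3=kn+k|V_1|+98k^2+38k+3-r,
\]
which is exactly the paper's computation.

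The fixes you sketch for a possible overshoot are beside the point. The $-k(6k+6)$ is a saving already credited when passing from $|V_2^2|+|V_2^3|$ to $n-|V_1|$. It is not an overhead against which the arcs $\mathcal O_v,\mathcal I_v$ for $v\in X\cup Y$ can be offset. Any overlap between the arc sets only makes the sum of their sizes a looser upper bound, so no double-counting check is required.
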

\begin{proof}
Put $r:=|V(\mathcal P)\cap V_1|$. Since every vertex of $V(\mathcal P)\cap V_1$ is an internal vertex of a path in $\mathcal P$, exactly $2r$ arcs of $\mathcal P$ are incident with these vertices. At most $2|X\cup Y|=12k+12$ of the $2r$ arcs of $\mathcal P$ incident with $V(\mathcal P)\cap V_1$ have their other end in $X\cup Y$. Hence at least $2r-(12k+12)$ already join $V_1$ to $V_2\setminus (X\cup Y)$, and the choice of $\mathcal O_v$ and $\mathcal I_v$ give $|\bigcup_{v\in V_1}{(\mathcal  O_v\cup \mathcal I_v)}|\le 2k|V_1|-2r+12k+12.$ Since the $k$ paths in $\mathcal P$ are pairwise disjoint, $|A(\mathcal P)|=|V(\mathcal P)|-k$. Moreover, every vertex of $V(\mathcal P)\cap V_2$ belongs to $V(T'_{\mathrm{sparse}})\cup X\cup Y$. Therefore
\begin{equation*}
|A(\mathcal P)|\le |T'_{\mathrm{sparse}}|+r+|X\cup Y|-k=|T'_{\mathrm{sparse}}|+r+5k+6.
\end{equation*}
Using \eqref{degree1} and (K2), we obtain
\begin{equation*}
\begin{aligned}
|E_1|&:=\left|\bigcup_{v\in V_1}(\mathcal O_v\cup\mathcal I_v)\cup A(T'_{\mathrm{sparse}})\cup A(\mathcal P)\cup A(\mathcal K)\right|\\
&\leq \bigl(2k|V_1|-2r+12k+12\bigr)
 +\bigl((k-1)|T'_{\mathrm{sparse}}|+9k^2-12k+3\bigr)\\
&\quad +\bigl(|T'_{\mathrm{sparse}}|+r+5k+6\bigr)
 +(48k^2+24k-24)\\
&=2k|V_1|+k|T'_{\mathrm{sparse}}|-r+57k^2+29k-3\\
&\leq 2k|V_1|+k|T'_{\mathrm{sparse}}|+57k^2+29k-3.
\end{aligned}
\end{equation*}
Let $E_2:=\bigcup_{v\in X\cup Y}{(\mathcal O_v\cup \mathcal I_v)}$. Then $|E_2|\le 2k(6k+6)=12k^2+12k$. Also, since $T_{\mathrm{rev}}[X]$ and $T_{\mathrm{rev}}[Y]$ are tournaments of order $3k+3$, $|A(T_{\mathrm{rev}}[X])|+|A(T_{\mathrm{rev}}[Y])|=2\binom{3k+3}{2}=9k^2+15k+6.$
Hence, by \eqref{degree2}, \eqref{degree3}, and the definition of $D^*$,
\begin{equation*}
\begin{aligned}
|A(D^*)|
&\leq \bigl(2k|V_1|+k|T'_{\mathrm{sparse}}|+57k^2+29k-3\bigr)
 +(12k^2+12k)\\
&\quad +(9k^2+15k+6)
 +\bigl(k|T''_{\mathrm{sparse}}|+9k^2-4k\bigr)
 +(17k^2-8k)\\
&=2k|V_1|+k|T'_{\mathrm{sparse}}|+k|T''_{\mathrm{sparse}}|+104k^2+44k+3\\
&= 2k|V_1|+k|V_2\setminus(X\cup Y)|+104k^2+44k+3\\
&= 2k|V_1|+k|V_2|+98k^2+38k+3\\
&= kn+k|V_1|+98k^2+38k+3.
\end{aligned}
\end{equation*}
This proves the claim.
\end{proof}

Now, it is sufficient to prove that $D^*$ is $k$-strong. By the definition of $k$-strong, it is sufficient to prove that there is a $(u,v)$-path in $D^*\setminus S$, for {every}  set $S$ with $|S|\leq k-1$ and $u,v\in V(D^*)\setminus S$. From now on, fix any such set $S$, and two vertices $u,v\in V(D^*)\setminus S$. Our aim is to find a $(u,v)$-path in $D^*\setminus S$. Since $|S|\leq k-1$, there is a path ${P_r}\in \mathcal{P}$ such that ${P_r}\subseteq  D^*\setminus S$. 

\begin{claim}
\label{cl:insidegood}
    If $u,v\in V_2\setminus (X\cup Y)$, then $D^*\setminus S$ contains a $(u,v)$-path.
\end{claim}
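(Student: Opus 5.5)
Both $u$ and $v$ lie in $V_2\setminus(X\cup Y)=V_2^2\cup V_2^3$. I will route $u\to S_{\mathrm{in}}$, then $S_{\mathrm{in}}\to X$, then $X\to Y$ along the surviving path $P_r$, then $Y\to S_{\mathrm{out}}$, and finally $S_{\mathrm{out}}\to v$. All of these pieces will be found in $D^*\setminus S$, and we concatenate them into a walk, which contains a $(u,v)$-path.

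\emph{Step 1: from $u$ to $S_{\mathrm{in}}$.} If $u\in V_2^3$, \eqref{e9} gives a path in $T''_{\mathrm{sparse}}\setminus S$ from $u$ to some vertex of $S''_{\mathrm{in}}\setminus S$. If $u\in V_2^2$, then \eqref{e7} only gives $(k-1)$-connectivity, which is not enough when $|S|=k-1$. In that case I use the path of $\mathcal P$ through $u$ (since $u\in\mathrm{Int}(\mathcal P)$). Either $S$ has at most $k-2$ vertices in $V(T'_{\mathrm{sparse}})$, and \eqref{e7} applies directly, or $S$ meets $u$'s own $\mathcal P$-path. I expect this case to be the delicate point. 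The plan is to combine the two options: if some vertex of $S$ lies on the $\mathcal P$-path containing $u$ but outside $T'_{\mathrm{sparse}}$, then at most $k-2$ vertices of $S$ lie in $T'_{\mathrm{sparse}}$ and \eqref{e7} works. Otherwise I follow $u$'s $\mathcal P$-path forward. Its terminal vertex lies in $Y$, which already connects onward as in Step 3.

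Either way we reach $S'_{\mathrm{in}}\cup S''_{\mathrm{in}}\subseteq V(T^{\mathrm{abs}})$ (or $Y$ directly). From a vertex of $T^{\mathrm{abs}}$ outside $S$, \eqref{e11} gives a path in $T^{\mathrm{abs}}_{\mathrm{sparse}}\setminus S$ to a vertex $w\in S_{\mathrm{in}}\setminus S$.

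\emph{Step 2: from $S_{\mathrm{in}}$ to $P_r$.} By (K1), $w$ is the centre of a $(w,X,3k+3,4)$-fan in $\mathcal K$. Since the paths are internally disjoint and $|S|\le k-1$, at least $2k+4$ of them avoid $S$, so we reach a set $X_0\subseteq X\setminus S$ with $|X_0|\ge 2k+4$. Now $D^*$ contains the whole tournament $T_{\mathrm{rev}}[X]$. Let $x_r\in A$ be the start of $P_r$; by the choice of $A$, $x_r$ has at least $k+1$ in-neighbours in $T[X]$. I need this to hold in $T_{\mathrm{rev}}[X]$. Reversal only affects arcs of $\rev{A(\mathcal P)}$, and by \eqref{e5} at most one in-arc is changed, so I expect to retain at least $k$ in-neighbours. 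Also $|X\setminus(X_0\cup S)|$ is small, so we can reach $x_r$ inside $T_{\mathrm{rev}}[X]\setminus S$: from $X_0$ either we hit an in-neighbour of $x_r$ not in $S$, or we argue via the count $|X_0|+|N^-(x_r)\setminus S|>|X|$, possibly with one extra step inside the tournament. This is the main bookkeeping step; if the counting fails I would use that every vertex of $X_0$ either dominates $x_r$ or is dominated by it, and use a 2-step path through $T_{\mathrm{rev}}[X]$.

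\emph{Step 3: from $P_r$ to $v$.} Follow $P_r$ to $y_r$. Symmetrically, using $d^+_{T[Y]}(y_r)\ge k+1$, we reach a set $Y_0\subseteq Y\setminus S$ of at least $2k+4$ vertices. The symmetric step needs target $v$: from \eqref{e9}/\eqref{e7} (with the same $\mathcal P$ workaround for $V_2^2$) and \eqref{e11}, there is $w'\in S_{\mathrm{out}}\setminus S$ with a path to $v$ avoiding $S$. The $(Y,w',3k+3,4)$-fan in $\mathcal K$ has at least $2k+4$ paths avoiding $S$, whose starting vertices meet $Y_0$ by counting ($|Y|=3k+3$). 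Concatenating all pieces yields a $(u,v)$-walk in $D^*\setminus S$, hence a $(u,v)$-path.
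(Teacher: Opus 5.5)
Your overall route ($u\to S_{\mathrm{in}}\to X\to P_r\to Y\to S_{\mathrm{out}}\to v$) is the paper's. However, Step 1, and its mirror in Step 3, has a genuine gap at exactly the point you flag as delicate. Your case split for $u\in V_2^2$ is not exhaustive. Failing $|S\cap V(T'_{\mathrm{sparse}})|\le k-2$ only means $S\subseteq V_2^2$ and $|S|=k-1$; it says nothing about $S$ meeting $u$'s $\mathcal P$-path outside $T'_{\mathrm{sparse}}$. Your ``otherwise'' branch therefore contains the following configuration:
\begin{itemize}
\item $S\subseteq V_2^2$ and $|S|=k-1$;
\item $S$ separates $u$ from $S'_{\mathrm{in}}$ in $T'_{\mathrm{sparse}}$;
\item some vertex of $S$ lies on the segment of $u$'s own $\mathcal P$-path after $u$.
\end{itemize}
In this configuration, following $u$'s path forward is blocked and \eqref{e7} gives nothing. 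So you reach neither $S_{\mathrm{in}}$ nor $Y$. The same issue affects your ``same workaround'' for $v\in V_2^2$.

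The missing idea is to pass to an extremal vertex and use the \emph{minimality} of the paths in $\mathcal P$; this is why they were chosen minimal.
\begin{itemize}
\item Let $v_i$ be the vertex of largest $\sigma'_{\mathrm{sparse}}$-index reachable from $u$ in $T'_{\mathrm{sparse}}\setminus S$, and suppose $v_i\notin S'_{\mathrm{in}}$.
\item By (D3), $d^+_{T'_{\mathrm{sparse}}}(v_i)\ge k-1$. By (D1) and maximality of $i$, all these out-neighbours lie in $S$, so $S=N^+_{T'_{\mathrm{sparse}}}(v_i)$.
\item Now follow the $\mathcal P$-path $P_j$ through $v_i$, not the one through $u$. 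The successor $w$ of $v_i$ on $P_j$ is not in $S$, because $v_iw\in A(\mathcal P)$ while $T'_{\mathrm{sparse}}\subseteq T_{\mathrm{rev}}[V_2^2]\setminus A(\mathcal P)$.
\item No later vertex of $P_j$ is an out-neighbour of $v_i$ in $D$, since $P_j$ is minimal.
\end{itemize}
Hence the segment of $P_j$ from $v_i$ to $y_j$ avoids $S$, and from $y_j$ your Step 3 argument continues.

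For $v\in V_2^2$, argue symmetrically. Take the smallest-index vertex that reaches $v$ in $T'_{\mathrm{sparse}}\setminus S$, and use the initial segment of its $\mathcal P$-path starting at $x_j$. Reach $x_j$ from $S_{\mathrm{in}}$ via an $X$-fan and an in-neighbour of $x_j$ in $T_{\mathrm{rev}}[X]$.

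Separately, your counts in Steps 2--3 need fixing. Each fan has exactly $|X|$ (resp.\ $|Y|$) paths, so at most $k-1$ of its terminals (resp.\ initial vertices) are lost to $S$. Meanwhile $x_r$ keeps at least $k$ in-neighbours in $T_{\mathrm{rev}}[X]$ (resp.\ $y_r$ at least $k$ out-neighbours in $T_{\mathrm{rev}}[Y]$). As written, the claim $|Y_0|\ge 2k+4$ is not justified, and $|X_0|+|N^-(x_r)\setminus S|$ is only bounded below by $2k+5$.
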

\begin{proof}
If $u\in T''_{\text{sparse}}$, it follows from \eqref{e9} that either $u\in S''_{\mathrm{in}}$ or there exists a path in $T''_{\text{sparse}}\setminus S$ from $u$ to some vertex $u'\in S''_{\mathrm{in}}$. Combining with \eqref{e11}, either $u'\in S_{\mathrm{in}}$ or there exists a path in $T^{\text{abs}}_{\text{sparse}}\setminus S$ from $u'$ to some vertex $u''\in S_{\mathrm{in}}$. Consequently, a path from $u$ to $u''\in S_{\mathrm{in}}$ exists within $D^*\setminus S$.

If $u\in T'_{\text{sparse}}$ and $u\in S'_{\mathrm{in}}$ or $u$ admits a path to $S'_{\mathrm{in}}$ in $T'_{\text{sparse}}\setminus S$, then \eqref{e11} guarantees the existence of a path from $u$ to a vertex $u''\in S_{\mathrm{in}}$ in $D^*\setminus S$. Otherwise, $u\notin S'_{\mathrm{in}}$ and no such path exists in $T'_{\text{sparse}}\setminus S$. Recall that $\sigma '_{\mathrm{sparse}}$ is an order  of $V(D'_{\mathrm{rev}})$ such that $T'_{\mathrm{sparse}}$ is $(\sigma '_{\mathrm{sparse}},k-1,2)$-nice and  $S'_{\mathrm{in}}:=\sigma '_{\mathrm{sparse}}(|D'_{\mathrm{rev}}|-2k+ 2, |D'_{\mathrm{rev}}|) $.  Denote $i$ be the maximum index such that $u$ can reach $v_i$ in the order $\sigma '_{\mathrm{sparse}}$ by a path in $T'_{\text{sparse}} - S$. If $i \geq |D'_{\mathrm{rev}}| - 2k+2$, then $v_i \in S'_{\mathrm{in}}$, a contradiction. Consequently, $i < |D'_{\mathrm{rev}}| - 2k+2$.  Since every vertex in $\sigma '_{\mathrm{sparse}}(1,|D'_{\mathrm{rev}}|-2k+1)$ has out-degree at least $k-1$ in $T'_{\mathrm{sparse}}$ (see (D3) in Definition \ref{def3}) and $|S| \leq k-1$, we must have  $S = N_{T'_{\mathrm{sparse}}}^+(v_i)$ and $|S|=k-1$. The definition of $D'_{\mathrm{rev}}$ implies that $v_i$ lies on some minimal path $P_j\subseteq \mathcal{P}$ from $x_j$ to $y_j$. Let $Q'$ denote the subpath of $P_j$ starting at $v_i$ and ending at $y_j$, and let $w$ be the out-neighbor of $v_i$ on $Q'$. As $P_j$ is minimal in $D$, $V(Q')\cap N_D^+(v_i)=\{w\}$. Together with $T'_{\text{sparse}}\subseteq T_{\mathrm{rev}}[V_2^2]\setminus A(\mathcal{P})$, we further have $V(Q')\cap N_{T'_{\text{sparse}}}^+(v_i)=V(Q')\cap S=\emptyset$. Let $Q''$ be a path in $T'_{\text{sparse}}\setminus S$ from $u$ to $v_i$ {(such a path exists by the definition of $v_i$ above)}. Then $Q''\circ Q'$ is a directed walk in $D^*\setminus S$ from $u$ to $y_j$.

As $d^+_{T[Y]}(y_j)\geq k+1$, it follows that  $d^+_{T_{\mathrm{rev}}[Y]}(y_j)\geq k$. Choose a vertex $y_j''\in S_{\mathrm{out}}\setminus S$; such a vertex exists by (\ref{e11}). Consider the $(Y,y_j'',3k+3,4)$-fan contained in $\mathcal{K}$. Among the at least $k$ vertices in $N^+_{T_{\mathrm{rev}}[Y]}(y_j)$, at most $|S|\le k-1$ {of the paths in the above fan intersect} $S$, because the paths of this fan are internally disjoint and have distinct initial vertices. Hence there is a vertex $y_j'\in N^+_{T_{\mathrm{rev}}[Y]}(y_j)$ such that the arc $y_jy_j'$ and the corresponding $(y_j',y_j'')$-path $P'$ both lie in $D^*\setminus S$. By (\ref{e11}), there is a path $P''$ in $T^{abs}_{\text{sparse}}\setminus S$ from $y_j''$ to a vertex of $S_{\mathrm{in}}\setminus S$. Therefore $Q''\circ Q'\circ y_jy_j'\circ P'\circ P''$ is a directed walk from $u$ to a vertex $u''\in S_{\mathrm{in}}$ in $D^*\setminus S$, {implying that $D^*\setminus S$ contains a $(u,u'')$-path}


{Analogous arguments show that for every} vertex $v\in V(T'_{\text{sparse}})\cup V(T''_{\text{sparse}})$, a path $R''$ from some vertex $v''\in S_{\mathrm{out}}$ to $v$ can be found in $D^*\setminus S$. Combined with condition ($K1$), $d^-_{T_{\mathrm{rev}}[X]}(x_{{r}})\geq k$ and $d^+_{T_{\mathrm{rev}}[Y]}(y_{{r}})\geq k$, there exists a path from $y_{{r}}$ to $v''$ and a path from $u''$ to $x_{{r}}$ within $D^*\setminus S$. Since ${P_r}\subseteq D^*\setminus S$, it follows that the desired $(u,v)$-path exists in $D^*\setminus S$ {and the claim is proved}.
\end{proof}

{It follows from Claim \ref{cl:insidegood} that to complete the proof that $D^*$ is $k$-strong it} is sufficient to verify that, for each vertex $v\in V_1\cup X\cup Y$, $D^*\setminus S$ contains
\begin{equation}\label{strong-connected}
\text{a path from $v$ to $V_2\setminus (X\cup Y)$ and a path from $V_2\setminus (X\cup Y)$ to $v$.}
\end{equation}
First consider $v\in V_1$. By the choice of $\mathcal O_v$ and
$\mathcal I_v$, we obtain
$d^+_{D^*}(v,V_2\setminus(X\cup Y))\ge k$
 and  $d^-_{D^*}(v,V_2\setminus(X\cup Y))\ge k$. Since $|S|\le k-1$, at least one out-neighbour and at least one
in-neighbour of $v$ in $V_2\setminus(X\cup Y)$ remain outside $S$.
Hence $D^*\setminus S$ contains both a path from $v$ to $V_2\setminus(X\cup Y)$ and a path from $V_2\setminus(X\cup Y)$ to $v$. Now let $v\in X\cup Y$. By construction, $\mathcal O_v$ consists
of $k$ out-arcs from $v$ to distinct vertices of
$V(D)\setminus(X\cup Y)$. Hence some arc $vw\in\mathcal O_v$
satisfies $w\notin S$. If $w\in V_2\setminus(X\cup Y)$, we are done.
Otherwise, $w\in V_1$, and the preceding paragraph gives a path
from $w$ to $V_2\setminus(X\cup Y)$ in $D^*\setminus S$. Thus
$v$ has a path to $V_2\setminus(X\cup Y)$ in $D^*\setminus S$.
The reverse argument, using $\mathcal I_v$, gives a path from
$V_2\setminus(X\cup Y)$ to $v$. Therefore \eqref{strong-connected} holds, and hence $D^*$ is $k$-strong.
\end{proof}

\section{Remarks  {and open problems}}\label{sec:remarks}
 {The construction below shows that one cannot replace the  minimum semi-degree assumption in Theorem~\ref{main1} by a lower bound on the out-degrees}.

\begin{proposition}
  Let $k\geq 2$ be a positive integer.  There exists {an infinite} family of $(2k-3)$-strong split digraphs $D$ of order $n$ with $\delta^+(D)\geq n/2-1$ that contains no spanning $k$-strong oriented subdigraph.
\end{proposition}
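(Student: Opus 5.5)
The plan is to place a small ``bottleneck'' set $W\subseteq V_2$ with $|W|=2k-3$ so that every vertex of $V_1$ receives arcs \emph{only} from $W$. The out-degree condition can then be met by giving the vertices of $V_1$ huge out-neighbourhoods, while no orientation can give every vertex of $V_1\cup W$ enough in-arcs. Concretely, I would take $V_2=W\cup U$, where $|U|$ is comparable to $|V_1|$ so that $\delta^+\ge n/2-1$. I would let $D[V_2]$ be the complete digraph, so that it is semicomplete and $W$ is as richly connected as possible. Every $x\in V_1$ would satisfy $N^-(x)=W$ and $N^+(x)=V_2$; in particular every arc between $x$ and $W$ lies in a $2$-cycle.

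\textbf{Step 1: connectivity.} I would show that $D$ is $(2k-3)$-strong with Menger's theorem (Theorem~\ref{menger}), or directly. After deleting any $S$ with $|S|\le 2k-4$, some $w\in W\setminus S$ survives. The complete digraph on $V_2\setminus S$ is strong. Each $x\in V_1$ has $w\to x$ and has out-arcs to every surviving vertex of $V_2$. Hence $D\setminus S$ is strong. The set $W$ also shows that the connectivity is exactly $2k-3$. Choosing $|U|\approx |V_1|$, with $|V_1|$ arbitrary, gives $\delta^+(D)\ge |V_2|-1\ge n/2-1$ and an infinite family.

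\textbf{Step 2: non-existence of the oriented subdigraph.} Suppose $H$ were a spanning $k$-strong oriented subdigraph. Every $x\in V_1$ needs $d^-_H(x)\ge k$, and all of these in-arcs come from $W$. Since $H$ is oriented and the pairs $\{x,w\}$ with $w\in W$ are $2$-cycles of $D$, the vertex $x$ then has at most $|W|-k=k-3$ out-arcs into $W$ in $H$. I would then count: the number of arcs from $W$ to $V_1$ in $H$ is at least $k|V_1|$, while the number from $V_1$ to $W$ is at most $(k-3)|V_1|$. I would compare this imbalance with the out-degrees of $W$, and with the fact that a set $S$ of $k-1$ vertices of $W$ leaves only $k-2$ possible in-neighbours for $V_1$. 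The aim is to exhibit some $S\subseteq W$ with $|S|\le k-1$ for which some $x\in V_1$ loses all its in-neighbours in $H\setminus S$.

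\textbf{Main obstacle.} Step 2 is the main obstacle. As set up, the naive count does not immediately contradict anything, because the vertices of $W$ can receive their in-arcs from $U$. I expect the construction will need tuning. One option is to make the in-neighbourhoods of the vertices of $V_1$ vary over $(2k-3)$-subsets of a slightly larger set $W$, so that a pigeonhole argument forces $k-1$ vertices of $W$ to cover all in-arcs of some $x$ in $H$. Another is to restrict arcs between $U$ and $W$ in $D$, in the spirit of the extremal complete digraph $K_{2k}$ for Conjecture~\ref{conj1}, so that $D[W\cup\{x\}]$ behaves like a complete digraph on too few vertices to host a $k$-strong orientation. I would try the first option first and verify $(2k-3)$-strong connectivity again after any such modification.
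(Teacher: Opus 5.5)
There is a genuine gap. Step~1 is fine, but for $k\ge 3$ your construction actually \emph{has} a spanning $k$-strong oriented subdigraph, so Step~2 cannot be completed. Take any $k$-strong tournament $T$ on $V_2$; this is possible since $D[V_2]$ is complete and $|V_2|$ is large. For every $x\in V_1$, add all arcs $wx$ with $w\in W$ and all arcs $xu$ with $u\in U$. The result is an oriented subdigraph of $D$. Now delete any $S$ with $|S|\le k-1$. Then $T\setminus S$ is strong, and each $x\notin S$ keeps an in-neighbour in $W\setminus S$ (because $|W|=2k-3\ge k$) and an out-neighbour in $U\setminus S$.

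So the vertices of $V_1$ are never the bottleneck. Their out-arcs to $U$ lie in no $2$-cycle, so nothing forces them to use an arc of a $2$-cycle $x\leftrightarrow w$ as an out-arc. This also defeats your first repair. Varying which $(2k-3)$-subset each $x$ sees changes nothing, because each $x$ can still take all of its at least $k$ in-arcs. Your stated goal, $k-1$ vertices of $W$ covering all in-arcs of some $x$ in $H$, is just the claim $d^-_H(x)\le k-1$, and that is false for the $H$ above. Your second repair, as phrased, destroys connectivity: since $N^-(x)=W$ for every $x\in V_1$, removing the arcs from $U$ to $W$ leaves no arc at all from $U$ into $W\cup V_1$.

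The missing idea is to put the in-degree bottleneck on $W$ rather than on $V_1$, and this is what the paper does:
\begin{itemize}
\item direct all arcs between $W$ and $U$ from $W$ to $U$ (so $D[V_2]$ stays semicomplete);
\item join $V_1$ and $U$ completely in both directions;
\item join $V_1$ to $W$ only by $2k-3$ vertex-disjoint $2$-cycles, one at each $w\in W$, with partner $x_w\in V_1$.
\end{itemize}
Then every $w\in W$ has $d^-_D(w)=2k-3$, and every one of these in-arcs lies in a $2$-cycle. In any oriented spanning subdigraph the in-degrees over $W$ therefore sum to at most $\binom{2k-3}{2}+(2k-3)=(2k-3)(k-1)$, so some $w$ has in-degree at most $k-1$.

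Only out-degrees need to be large, so $\delta^+(D)\ge n/2-1$ still holds. Connectivity $2k-3$ comes from the $2k-3$ disjoint routes $u\to x_w\to w$ from $U$ into $W$ through the matched $2$-cycles, together with the complete joins elsewhere.
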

\begin{proof}
    We define a family of split digraphs $D=(V_1,V_2;A)$ of order $n$ as follows. Let $D[V_1]$ be an arcless digraph of order $\lfloor n/2\rfloor-2k+3$. Let $D[V^1_2]$ be a complete digraph on $2k-3$ vertices, and let $D[V^2_2]$ be a complete digraph on $\lceil n/2\rceil$ vertices. All arcs between $V^1_2$ and $V^2_2$ in $D$ are directed from $V^1_2$ to $V^2_2$. The bipartite digraph $D[V_1,V^2_2]$ is complete, and there are exactly $2k-3$ disjoint $2$-cycles between $V_1$ and $V^1_2$. By construction, it is straightforward to verify that $D$ satisfies $\delta^+(D)\geq \lceil n/2 \rceil-1$, and $D$ contains no spanning $k$-strong oriented subdigraph. Indeed, if we delete one arc from each $2$-cycle in $D$, some vertex of $D[V^1_2]$ will have in-degree at most $k-1$. 


  We prove that $D$ is $(2k-3)$-strong. Assume, to the contrary, that $D$ is not $(2k-3)$-strong. Then there exists a vertex set $S\subseteq V(D)$ with $|S|\leq 2k-4$ and two vertices $u,v\in V(D)\setminus S$ such that $D-S$ contains no $(u,v)$-path. Our construction implies that an arc $uv$ or a 2-path $uwv$ always exists in $D-S$, which eliminates the cases $u,v\in V_1\cup V_2^2$, $u,v\in V_2^1$, and $u\in V_2^1, v\in V_2^2$. It therefore suffices to verify the remaining three scenarios: $u\in V^2_2, v\in V^1_2$; $ u\in V^1_2, v\in V_1 $ and $ u\in V_1, v\in V_2^1 $. By construction, $D[V_1,V_2^2]$ is a complete bipartite digraph, and there are $2k-3$ disjoint $2$-cycles between $V_1$ and $V_2^1$. Since $|S|\leq 2k-4$, at least one such $2$-cycle remains in $D-S$. This ensures the existence of a $2$-path $P$ from a vertex $u'\in V^2_2$ to a vertex in $u''\in V^1_2$ and a 2-path $Q$ from a vertex $v'\in V^1_2$ to a vertex in $v''\in V^2_2 $.

For $u\in V_2^2, v\in V_2^1$, the completeness of $D[V^1_2]$ and $D[V^2_2]$ yields $uu',u''v\in A(D-S)$. Then $uu'\circ P\circ u''v$ defines a $(u,v)$-path in $D-S$. For $u\in V_2^1, v\in V_1$, we have $uv',v''v\in A(D-S)$, so $uv'\circ Q\circ v''v$ is a $(u,v)$-path in $D-S$. For $u\in V_1, v\in V_2^1$, it holds that $uu',u''v\in A(D-S)$, and hence $uu'\circ P\circ u''v$ forms a $(u,v)$-path in $D-S$. All configurations contradict the initial assumption. Consequently, $D$ is $2k-3$-strong. This completes the proof.
\end{proof}

Bang-Jensen and Jordán \cite{bangDM310} constructed an infinite family of $(2k-2)$-strong semicomplete digraphs admitting no spanning $k$-strong tournament. This demonstrates that the minimum semi-degree bound in Theorem~\ref{main1}  cannot be improved below $2k-1$.

A result established by Kang shows that even sufficiently large minimum semi-degree cannot reduce the sparsity of any spanning $k$-strong subdigraph in some split digraphs (see the construction of digraph $G_{n_1,n_2,k,\overline{\Delta}}$ in \cite{kang2018}).
\begin{proposition}\label{prop2}\cite{kang2018}
    There exists a family of $k$-strong split digraphs $D$ of order $n$ with $\delta^0(D)\geq \lfloor \tfrac{n-|V_1|}{4}\big\rfloor-1$ such that every spanning $k$-strong subdigraph $D'$ of $D$ has at least $kn+k|V_1|$ arcs.
\end{proposition}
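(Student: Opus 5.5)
The plan is to give an explicit construction in the spirit of Kang's digraph $G_{n_1,n_2,k,\overline{\Delta}}$ and then prove the arc lower bound by a degree count that splits the arcs into disjoint classes. Since $V_1$ is independent, every arc incident with $V_1$ has its other end in $V_2$. In any spanning $k$-strong subdigraph $D'$, every $v\in V_1$ has $d^+_{D'}(v)\ge k$ and $d^-_{D'}(v)\ge k$, so at least $2k|V_1|$ arcs of $D'$ are incident with $V_1$. It then suffices to force at least $k|V_2|$ further arcs with both ends in $V_2$, because $2k|V_1|+k|V_2|=kn+k|V_1|$.

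The construction I would try is as follows. Partition $V_2=A\cup B$, let $D[A]$ and $D[B]$ be complete digraphs, and orient the arcs of $D[V_2]$ between $A$ and $B$ mainly from $B$ to $A$. Make every cross arc between $V_1$ and $V_2$ go either from $V_1$ into $A$ or from $B$ into $V_1$, with complete bipartite adjacency in these two directions. Then no vertex of $A$ sends an arc to $V_1$, so a vertex of $A$ has all its out-arcs inside $V_2$. Likewise no vertex of $B$ receives an arc from $V_1$, so a vertex of $B$ has all its in-arcs inside $V_2$. Counting out-arcs of $A$-vertices and in-arcs of $B$-vertices gives at least $k|A|+k|B|=k|V_2|$ arcs inside $V_2$. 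These are distinct except for arcs from $A$ to $B$, which are counted twice. With $|A|,|B|\approx (n-|V_1|)/2$ and only a sparse set of $A\to B$ arcs, the semidegree bound $\lfloor (n-|V_1|)/4\rfloor-1$ should come from vertices inside $A$ and $B$ seeing about half of their own part.

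The main obstacle is the tension between the two requirements. If there are no arcs from $A$ to $B$, then $A$ is closed under out-arcs and $D$ is not even strong. But every $A\to B$ arc weakens the count, because it may serve simultaneously as an out-arc of an $A$-vertex and an in-arc of a $B$-vertex. I would first try to route the needed $A$-to-$B$ connectivity through a set of $k$ special vertices whose other arcs are restricted. For instance, let $A\to B$ arcs touch only a small set $A_0\subseteq A$ of $k$ vertices, and make each vertex of $A_0$ have no in-arcs from $V_1$ and no arcs inside $A$. Then every arc leaving $A_0$ goes to $B$ and must be paid for separately. Alternatively, I would use Kang's degree parameter $\overline{\Delta}$ to choose the sizes so that the loss from double counting is compensated exactly by extra forced arcs at these special vertices.

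The final step is to check that $D$ is $k$-strong via Menger's theorem (Theorem~\ref{menger}). For each pair of classes I would exhibit $k$ internally disjoint paths, namely $V_1\to A\to A_0\to B\to V_1$ and paths inside the complete parts. After that, I would verify the semidegree bound directly from the part sizes.
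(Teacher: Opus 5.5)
Your proposal has a genuine gap. The construction you describe cannot satisfy the bound, so the double-counting problem you flag is fatal rather than technical.

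Write $a_{AB}$ and $a_{BA}$ for the numbers of arcs of $D'$ from $A$ to $B$ and from $B$ to $A$. Your count is then exactly $|A(D')|\ge 2k|V_1|+k|V_2|-a_{AB}+a_{BA}$. Since $A$ sends nothing to $V_1$, strong connectivity forces $a_{AB}>0$, and nothing in the construction forces compensating arcs from $B$ to $A$. In fact, cheaper spanning subdigraphs exist.

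For $k=1$, let $ab$ be any $A\to B$ arc. Take a Hamiltonian path of $D[A]$ from some $a_1$ to $a$, a Hamiltonian path of $D[B]$ from $b$ to some $b'$, and the arcs $b'v$, $va_1$ for all $v\in V_1$. This spanning subdigraph is strong, because each $v\in V_1$ closes the long path $a_1\cdots a\,b\cdots b'$ into a cycle. It has $|V_2|-1+2|V_1|=n+|V_1|-1$ arcs.

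For general $k$, take the $k$-th power of the Hamiltonian path $a_1\cdots a_p b_1\cdots b_q$. Add arcs from $b_{q-k+1},\dots,b_q$ to every $v\in V_1$ and from every $v\in V_1$ to $a_1,\dots,a_k$. This is $k$-strong when $|V_1|\ge k$ and has $kn+k|V_1|-\binom{k+1}{2}$ arcs. It only requires $D$ to contain the $\binom{k+1}{2}$ arcs it uses from the last $k$ vertices of $A$ to the first $k$ vertices of $B$.

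The repair via $A_0$ does not help:
\begin{itemize}
\item A vertex of $A_0$ cannot have no arcs inside $A$, because $D[V_2]$ must be semicomplete, and you also declared $D[A]$ complete. If you delete those arcs anyway, no arc leaves $A\setminus A_0$, so $D$ is not strong.
\item If you instead orient the arcs between $A\setminus A_0$ and $A_0$ into $A_0$, the out-arcs of $A_0$ are precisely the doubly counted arcs. Each is also one of the required in-arcs of a vertex of $B$, so they are not `paid for separately'. The $k=1$ example above survives with $a\in A_0$ and $a_1\notin A_0$.
\item Invoking $\overline{\Delta}$ to make the losses cancel is not an argument.
\end{itemize}

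What is missing is an actual digraph whose structure forces $|A(D'[V_2])|\ge k|V_2|$ for every spanning $k$-strong $D'$. In such a digraph, $V_2$ cannot be threaded by one sparse path-like structure with all of $V_1$ hung on its two ends, as in the examples above.

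The paper does not prove this proposition itself; it quotes it from Kang's construction $G_{n_1,n_2,k,\overline{\Delta}}$ in \cite{kang2018}. To give a proof, you would have to reproduce that construction together with its lower-bound argument. You would then still need to carry out the $k$-strong and semidegree verifications, which you only sketch.
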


 Recall that a result of Bang-Jensen, Huang, and Yeo \cite{bangJGT46} implies the existence of a family of $k$-strong tournaments of order $n$ {with the property that} every spanning $k$-strong subdigraph of {such a tournament} must contain at least $kn + k(k-1)/2$ arcs. Combined with Proposition \ref{prop2}, this shows that the arc bound $kn+k|V_1|+98k^2+38k+3$ in Theorem \ref{main1} is optimal in a certain sense, {namely, even for tournaments we cannot get below $kn$ plus a quadratic function of $k$.}

\begin{problem}\label{prob1}
Determine the optimal minimum semi-degree condition in Theorem~\ref{main1}. 
\end{problem}

A digraph $D=(V,A)$ is {\bf $k$}-arc-strong if $D\setminus A'$ is strong for every subset $A'\subset A$ of size at most $k-1$.
\begin{theorem}[Nash-Williams]
\cite{nashwilliamsCJM12}\label{thm:nwthm} A graph $G$ has a $k$-arc-strong orientation if and only if $G$ is $2k$-edge-connected.
\end{theorem}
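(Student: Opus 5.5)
The plan is to handle the two directions separately: necessity by counting edges across a cut, sufficiency by inducting through Lovász's splitting-off theorem (equivalently, Mader's splitting lemma).

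\emph{Necessity.} Let $D$ be a $k$-arc-strong orientation of $G$ and let $\emptyset\neq X\subsetneq V(G)$. Then $D$ has at least $k$ arcs leaving $X$ and at least $k$ arcs entering $X$. These are distinct edges of $G$, so $d_G(X)\ge 2k$. Hence $G$ is $2k$-edge-connected.

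\emph{Sufficiency.} We induct on $|E(G)|+|V(G)|$, allowing multigraphs. Take $G$ to be $2k$-edge-connected and, as we may, minimally so: if some edge $e$ has $G-e$ still $2k$-edge-connected, orient $G-e$ by induction and give $e$ an arbitrary direction. A minimal $2k$-edge-connected graph has a vertex of degree exactly $2k$ (Mader's theorem on minimally $k$-edge-connected graphs). In fact we only need some vertex $s$ of even degree with $d(s)\ge 2k$ in a general setting, so instead we do the following.

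Pick any vertex $s$ and use Lovász's splitting theorem. If $d(s)$ is even and $G$ is $2k$-edge-connected between the vertices of $V-s$, then there is a complete splitting at $s$: the edges at $s$ are paired into $d(s)/2$ pairs $us,sv$, each replaced by an edge $uv$, and the resulting graph $G'$ on $V-s$ is still $2k$-edge-connected.

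The degree-parity issue comes first. By Mader's result we may pick $s$ with $d(s)=2k$ in a minimal $G$, since minimal $2k$-edge-connected graphs have a vertex of degree $2k$. We then apply induction to $G'$ to get a $k$-arc-strong orientation $D'$. We lift it back: a split edge $uv$ oriented $u\to v$ becomes the path $u\to s\to v$. Each lifted pair gives $s$ one in-arc and one out-arc, so $s$ gets in- and out-degree $k$.

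It remains to verify cuts. For $X\subseteq V$ with $s\in X$ and $X\neq V$: if $X=\{s\}$ the cut has $k$ arcs each way. Otherwise the cut value in $D$ is at least the value of the corresponding cut $X-s$ in $D'$, because each arc of $D'$ crossing the cut lifts to a path that crosses it too. That value is at least $k$ each way. The symmetric case $s\notin X$ is the same.

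The base case is two vertices joined by at least $2k$ parallel edges, oriented $k$ each way.

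The main obstacle is quoting the splitting and minimal-degree results correctly and keeping the induction inside multigraphs, since splitting may create parallel edges.
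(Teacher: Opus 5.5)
The paper gives no proof of this statement. It only quotes it from Nash-Williams~\cite{nashwilliamsCJM12} as background for its closing open problem, so there is no in-paper argument to compare with. Your proposal is Lovász's standard splitting-off proof, and it is correct. You reduce to a minimally $2k$-edge-connected multigraph, take a vertex $s$ with $d(s)=2k$ by Mader's theorem, and split off all edges at $s$ using Lovász's theorem (valid since $2k\ge 2$). You then orient the smaller graph inductively and lift each split edge $u\to v$ to $u\to s\to v$. The lift gives $s$ exactly $k$ arcs each way. Your cut check is right, because each arc of $D'$ leaving $X-s$ (resp.\ $X$) yields a distinct arc of $D$ leaving $X$.

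Nash-Williams' original route goes through odd-vertex pairings and yields the stronger well-balanced orientation theorem, namely $\lambda_D(x,y)\ge\lfloor\lambda_G(x,y)/2\rfloor$ for every pair $x,y$. Your route is much shorter but proves only the global statement, and it places all the difficulty in the two quoted theorems.

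Three points to tidy:
\begin{enumerate}
\item Delete the paragraph suggesting you may pick ``any vertex $s$'' of even degree. A $2k$-edge-connected graph need have no even-degree vertex (e.g.\ $K_{2k+2}$), and Mader's theorem is exactly what supplies one.
\item Your induction runs through multigraphs, so you need Mader's theorem in its multigraph form. It does hold, by a short uncrossing argument. Take an inclusion-minimal tight set $X$ with $d(X)=2k$. An edge $uv$ inside $X$ would lie across some tight set $Y$. Submodularity (if $X\cup Y\neq V$) or posimodularity (if $X\cup Y=V$) then makes $X\cap Y$ or $X\setminus Y$ a smaller tight set. Hence $X$ spans no edges, and $|X|\ge 2$ would force $d(X)=\sum_{x\in X}d(x)\ge 4k$, a contradiction.
\item Splitting a pair of parallel edges $su,su$ creates a loop at $u$. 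Either discard it and orient the two edges $su$ oppositely, or keep it and lift it as $u\to s\to u$. In both cases $s$ remains balanced and the cut count is unaffected.
\end{enumerate}
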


Frank \cite{frank1995} conjectured that a graph $G=(V,E)$ has a $k$-strong orientation if and only if $G[V\setminus X]$ is $2(k-|X|)$-edge-connected for every subset $X\subset V$ of size at most $k-1$. A graph $G$ satisfying this condition is called {\bf weakly-$2k$-connected}. The condition is clearly necessary by Theorem \ref{thm:nwthm}. Thomassen proved Frank's conjecture  for $k=2$ \cite{186} but Durand de Gevigny disproved the conjecture for all higher values of $k$ \cite{degevignyJCT141}. The counterexamples have several vertices of degree in $2k$. Inspired by our results which show that for $k$-strong split digraphs, putting a lower bound on the minimum semi-degree is enough to guarantee a spanning $k$-strong oriented subdigraph, we pose the following problem.

\begin{problem}
    Does there exist a function $h(k)$ such that every graph $G$ which is weakly-$2k$-connected and with minimum degree $\delta{}(G)\geq h(k)$ has a $k$-strong orientation?
\end{problem}

By the result of Garamvolgyi et al. \cite{garamvolgyi2025highly} we know that every $320k^2$-connected graph has a $k$-strong orientation so the following
is a natural step on the way to proving Conjecture \ref{conj1} in the case of symmetric digraphs.

\begin{conjecture}

    There exists a function $z(k)$ such that every $2k$-strong symmetric digraph $D$ with $\delta^0(D)\geq z(k)$ has a spanning $k$-strong oriented graph?

\end{conjecture}

\bibliography{refs1} 

\begin{thebibliography}{10}

\bibitem{bang2009}
J.~Bang-Jensen.
\newblock Problems and conjectures concerning connectivity, paths, trees and cycles in tournament-like digraphs.
\newblock {\em Discrete Math.}, 309:5655--5667, 2009.

\bibitem{book}
J.~Bang-Jensen and G.~Gutin.
\newblock {\em Digraphs: Theory, Algorithms and Applications}.
\newblock Springer-Verlag London, 2009.

\bibitem{13}
J.~Bang-Jensen and J.~Huang.
\newblock Quasi-transitive digraphs.
\newblock {\em J. Graph Theory}, 20(2):141--161, 1995.

\bibitem{bangJGT46}
J.~Bang-Jensen, J.~Huang, and A.~Yeo.
\newblock {Spanning {$k$}-arc-strong subdigraphs with few arcs in {$k$}-arc-strong tournaments}.
\newblock {\em J. Graph Theory}, 46(4):265--284, 2004.

\bibitem{bangDM310}
J.~Bang{-}Jensen and T.~Jord{\'{a}}n.
\newblock Spanning 2-strong tournaments in 3-strong semicomplete digraphs.
\newblock {\em Discret. Math.}, 310(9):1424--1428, 2010.

\bibitem{BangJensenWangSplit}
J.~Bang-Jensen and Y.~Wang.
\newblock Strong arc decompositions of split digraphs.
\newblock {\em J. Graph Theory}, 108(1):5--26, 2025.

\bibitem{berg2005}
A.~R. Berg and T.~Jord{\'a}n.
\newblock Minimally $k$-edge-connected directed graphs of maximum size.
\newblock {\em Graphs Combin.}, 21:39--50, 2005.

\bibitem{boesch1980}
F.~Boesch and R.~Tindell.
\newblock Robbins's theorem for mixed multigraphs.
\newblock {\em Amer. Math. Mon.}, 87(9):716--719, 1980.

\bibitem{ChenBangJensenYanZhou2026}
X.~Chen, J.~Bang-Jensen, J.~Yan, and J.~Zhou.
\newblock On the $2$-linkage problem for split digraphs.
\newblock arXiv:2603.07603.

\bibitem{dalmazzo1977}
M.~Dalmazzo.
\newblock Nombre d'arcs dans les graphes $k$-arc-connexes minimaux.
\newblock {\em Acad. Sci. Paris Sér. A-B}, 285:A341--A344, 1977.

\bibitem{degevignyJCT141}
O.~Durand de~Gevigney.
\newblock {On Frank's conjecture on \emph{k}-connected orientations}.
\newblock {\em J. Comb. Theory {B}}, 141:105--114, 2020.

\bibitem{frank1995}
A.~Frank.
\newblock Connectivity and network flows.
\newblock In {\em Handbook of combinatorics, Vol. 1}, pages 111--177. Elsevier, 1995.

\bibitem{garamvolgyi2025highly}
D{\'a}niel Garamv{\"o}lgyi, Tibor Jord{\'a}n, Csaba Kir{\'a}ly, and Soma Vill{\'a}nyi.
\newblock Highly connected orientations from edge-disjoint rigid subgraphs.
\newblock {\em Forum Math. Pi}, 13:e11, 2025.

\bibitem{5}
Y.~Guo.
\newblock Spanning local tournaments in locally semicomplete digraphs.
\newblock {\em Discrete Appl. Math.}, 79:119--125, 1997.

\bibitem{115}
T.~Jordán.
\newblock On the existence of $k$ edge-disjoint 2-connected spanning subgraphs.
\newblock {\em J. Combin. Theory Ser. B}, 95(2):257--262, 2005.

\bibitem{kang2021}
D.~Kang and J.~Kim.
\newblock On 1-factors with prescribed lengths in tournaments.
\newblock {\em J. Comb. Theory B}, 141:31--71, 2020.

\bibitem{kang2017}
D.~Y. Kang, J.~Kim, Y.~Kim, and G.~Suh.
\newblock Sparse spanning $k$-connected subgraphs in tournaments.
\newblock {\em SIAM J. Discrete Math.}, 31:2206--2227, 2017.

\bibitem{kang2018}
D.Y. Kang.
\newblock Sparse highly connected spanning subgraphs in dense directed graphs.
\newblock {\em Combin. Probab. Comput.}, 27:892--907, 2018.

\bibitem{mader1985}
W.~Mader.
\newblock Minimal $n$-fach zusammenhängende digraphen.
\newblock {\em J. Combin. Theory Ser. B}, 38:102--117, 1985.

\bibitem{menger}
K.~Menger.
\newblock Zur allgemeinen kurventheorie.
\newblock {\em Fund. Math.}, 10:96--115, 1927.

\bibitem{nashwilliamsCJM12}
C.St.J.A. Nash-Williams.
\newblock On orientations, connectivity and odd-vertex- pairings in finite graphs.
\newblock {\em Can. J. Math}, 12:555--567, 1960.

\bibitem{NguyenScottSeymour}
Tung Nguyen, Alex Scott, and Paul Seymour.
\newblock Distant digraph domination.
\newblock {\em Electron. J. Combin.}, 33(1):P1.32, 2026.

\bibitem{Robbins1939}
H.E. Robbins.
\newblock A theorem on graphs with an application to a problem on traffic control.
\newblock {\em Amer. Math. Mon.}, 46:281--283, 1939.

\bibitem{61}
Y.~Sun and Z.~Jin.
\newblock Semicomplete compositions of digraphs.
\newblock {\em Discrete Math.}, 346(8):113420, 2023.

\bibitem{thomassen1989}
C.~Thomassen.
\newblock Configurations in graphs of large minimum degree, connectivity, or chromatic number.
\newblock {\em Ann. N. Y. Acad. Sci.}, 555:402--412, 1989.

\bibitem{186}
C.~Thomassen.
\newblock Strongly 2-connected orientations of graphs.
\newblock {\em J. Combin. Theory Ser. B}, 110:67--78, 2015.

\bibitem{62}
L.~Volkmann.
\newblock Cycles in multipartite tournaments: results and problems.
\newblock {\em Discrete Math.}, 245(1):19--53, 2002.

\bibitem{wang2024spanning}
K.~Wang, Y.~Qi, and J.~Yan.
\newblock Spanning 3-strong tournaments in 5-strong semicomplete digraphs.
\newblock {\em Discrete Math.}, 347(1):113714, 2024.

\bibitem{zhouDM26}
J.~Zhou, J.~Bang-Jensen, and J.~Yan.
\newblock On the $k$-linkage problem for generalizations of semicomplete digraphs.
\newblock {\em Discrete Math.}, 349(2):114700, 2026.

\bibitem{ZhouBangJensenZhouYan2026}
J.~Zhou, J.~Bang-Jensen, T.~Zhou, and J.~Yan.
\newblock Highly connected spanning oriented subdigraphs in generalizations of semicomplete digraphs, 2026.
\newblock arXiv:2607.17150.

\bibitem{zhou2025proof}
J.~Zhou and J.~Yan.
\newblock Proof of the linkage conjecture for highly connected tournaments, 2025.
\newblock arXiv:2507.22651.

\bibitem{Zhouarxiv}
T.~Zhou, J.~Bang-Jensen, J.~Zhou, and J.~Yan.
\newblock $k$-arc-strong orientations of semicomplete digraphs, 2026.
\newblock arXiv:2607.17116.

\end{thebibliography}

\end{document}